\documentclass[pdflatex,sn-mathphys-num]{sn-jnl}% Math and Physical Sciences Numbered Reference Style
\usepackage{graphicx}%
\usepackage{multirow}%
\usepackage{amsmath,amssymb,amsfonts}%
\usepackage{amsthm}%
\usepackage{mathrsfs}%
\usepackage[title]{appendix}%
\usepackage{xcolor}%
\usepackage{textcomp}%
\usepackage{manyfoot}%
\usepackage{booktabs}%
\usepackage{algorithm}%
\usepackage{algorithmicx}%
\usepackage{algpseudocode}%
\usepackage{listings}%
\usepackage{upgreek}
\usepackage{tabularx}
\usepackage{array}

\theoremstyle{thmstyleone}%
\newtheorem{theorem}{Theorem}%  meant for continuous numbers
\newtheorem{proposition}[theorem]{Proposition}% 

\theoremstyle{thmstyletwo}%

\theoremstyle{thmstylethree}%
\newtheorem{definition}{Definition}%

\begin{document}

\title[Article Title]{\centering A Data-embedded Solution Paradigm for Nonconvex Probable Event Constrained Optimization}

%%=============================================================%%
%% GivenName	-> \fnm{Joergen W.}
%% Particle	-> \spfx{van der} -> surname prefix
%% FamilyName	-> \sur{Ploeg}
%% Suffix	-> \sfx{IV}
%% \author*[1,2]{\fnm{Joergen W.} \spfx{van der} \sur{Ploeg} 
%%  \sfx{IV}}\email{iauthor@gmail.com}
%%=============================================================%%

\author{\fnm{Qifeng} \sur{Li}}\email{qifeng.li@ucf.edu}

%\author[2,3]{\fnm{Second} \sur{Author}}\email{iiauthor@gmail.com}
%\equalcont{These authors contributed equally to this work.}

%\author[1,2]{\fnm{Third} \sur{Author}}\email{iiiauthor@gmail.com}
%\equalcont{These authors contributed equally to this work.}

\affil{\orgdiv{Department of Electrical and Computer Engineering}, \orgname{University of Central Florida}, \orgaddress{\street{4000 Central Florida Blvd}, \city{Orlando}, \postcode{32816}, \state{Florida}, \country{USA}}}

% \affil[2]{\orgdiv{Department}, \orgname{Organization}, \orgaddress{\street{Street}, \city{City}, \postcode{10587}, \state{State}, \country{Country}}}

% \affil[3]{\orgdiv{Department}, \orgname{Organization}, \orgaddress{\street{Street}, \city{City}, \postcode{610101}, \state{State}, \country{Country}}}

%%==================================%%
%% Sample for unstructured abstract %%
%%==================================%%

\abstract{This paper introduces a new modeling framework for optimization under uncertainty, called Probable Event Constrained Optimization (PECO). Unlike conventional chance-constrained formulations, which only limit the probability of constraint violation, PECO also explicitly requires feasibility for all events whose probability exceeds a prescribed threshold. This guarantees that solutions remain valid across all high-probability realizations of uncertainty. To solve PECO, we proposed a data-embedded program (DEP) which directly incorporates historical measurements of the uncertain parameters to obtain a deterministic approximation for PECO. While existing solution methods for optimization problems under uncertainty rely heavily on convexity or linearity assumptions, the proposed data-embedded solution paradigm provides a unique opportunity for solving nonlinear and nonconvex PECOs. The effectiveness of this approach depends on properly estimating the number of elements in the family of solution-determining data sets. As we enter the era of big data, this information can be properly estimated by leveraging the power of machine learning. }

\keywords{Chance-constrained optimization, data-driven optimization, robust optimization, uncertainty}

%%\pacs[JEL Classification]{D8, H51}

%%\pacs[MSC Classification]{35A01, 65L10, 65L12, 65L20, 65L70}

\maketitle

\section{Introduction}\label{sec1}

Optimization under uncertainty is central to modern operations research, with applications ranging from finance and logistics to engineering. Among established approaches, robust optimization (RO) \cite{ben2009robust} secures solutions that are feasible to all possible realizations of uncertainty, while distributionally robust optimization (DRO) \cite{rahimian2019distributionally} extends this idea to all possible probability distributions. Chance-constrained optimization (CCO) \cite{li2008chance} allows constraint violations with small probability. Although these approaches have proven effective, they share an important limitation: they guarantee feasibility in a probabilistic aggregate sense, rather than at the level of every high-probability event. As a result, certain highly likely scenarios may still be infeasible—even if the overall violation probability remains below the confidence level of CCO. This gap is critical in engineering applications where decisions are required to be optimal for high-probability events to maintain high efficiency. In contrast, for low-probability but high-impact events, optimality is typically not required; instead, safety becomes the primary concern when such events occur. To address this issue, we introduce Probable Event Constrained Optimization (PECO) which enforces not only that the total feasibility probability but also that all events whose probability exceeds a user-defined threshold are themselves fully feasible. In this sense, PECO strengthens the probabilistic guarantee from a global measure to an event-wise requirement. The differences between PECO and the existing models are further compared in Table \ref{tb:comparison}.
\begin{table}[h]
\footnotesize
\centering
\caption{Comparison of modeling and solution paradigms}
\vspace{-6pt}
\begin{tabular}{|m{0.88cm}|m{3.26cm}|m{2.88cm}|m{4.28cm}|}
\hline
\textbf{Model}   & \textbf{Logical   Guarantee}                             & \textbf{Solution   method}          & \textbf{Key   limitations/Advantages}                            \\ \hline
RO               & Feasibility for the whole uncertainty set                                      & Min-max approximation \cite{sniedovich2008wald}                                   & Ignores probability information; overly   conservative; strong convexity requirements \\ \hline
CCO              & Bounded probability of constraint violation                                    & Stochastic/robust approximations \cite{han2016robust}, and scenario   methods \cite{calafiore2005uncertain} & Does not ensure feasibility for high-probability   events; convexity-dependent        \\ \hline
DRO              & Feasibility for all distributions in an ambiguity   set                        & Dual approximation of inner supremum \cite{delage2010distributionally}                    & Requires carefully designed ambiguity sets; heavy   reliance on convexity             \\ \hline
PECO (this work) & Feasibility to all high-probability realizations under   unknown distributions & DESP (this work)                                                     & No explicit uncertainty or ambiguity sets; robust to   nonlinearity and nonconvexity  \\ \hline
\end{tabular}
\label{tb:comparison}
\end{table}

Another critical issue associated with existing research is the lack of effective solution methods for nonlinear and nonconvex optimization problems under uncertainty. Despite the fact that engineering problems are in general nonconvex, existing mature solution methods, as listed in Table \ref{tb:comparison}, rely on rigorous assumptions of convexity. To address this issue, we developed a data-embedded solution paradigm (DESP) which provides a unique opportunity for solving nonlinear, nonconvex PECOs. Under DESP, a data-embedded program (DEP) is proposed as the deterministic approximation of PECO. The DEP is constructed by directly embedding historical measurements of the uncertain parameters into the optimization problem. Without relying on assumptions of convexity, a probabilistic metric was developed to evaluate the accuracy of approximating a PECO with the DEP model based on an estimate of the number of elements in the family of solution-determining data sets (SDDS\footnote{See Definition \ref{def:rdsf}}). Although a valid upper or lower bound for this quantity may not exist in general, for a specific engineering problem it can often be predicted with satisfactory accuracy using state-of-the-art machine learning methods.

To sum up, the contributions of this work are twofold: 1) we propose a new optimization model--PECO--that guarantees both the low probability of constraint violation and feasibility to high-probability events; and 2) we develop a data-embedded solution paradigm (DESP) that enables the solution of nonconvex PECOs. The remainder of this paper is organized as follows. Section 2 introduces the key properties of the PECO model and positions PECO relative to RO, CCO, RO-CCO, and DRO. Section 3 presents the DEP deterministic approximation. Section 4 summarizes the overall procedure of DESP, while future work is discussed in Section 5.
% \begin{enumerate} 
% %\vspace{-6pt}
% \itemsep0em 
%     \item A new optimization model--PECO--that strengthens CCO by ensuring feasibility for all high-probability events, offering reliability beyond distributionally robust or chance-constrained formulations.
%     \item A data-embedded deterministic approximation (DeDA) that transforms PECO into a deterministic form by directly embedding observed data points, bypassing minimax formulations.
%     \item Strategic data-selection (SDS): an efficient procedure to identify only those data points that define feasibility boundaries, thereby improving scalability.
%     \item Applications to nonlinear, nonconvex problems: unlike many robust and chance-constrained methods, PECO with DeDA is suitable for general optimization settings.
% \end{enumerate}

\section{Probable Event Constrained Optimization (PECO)}\label{sec2}
This section introduces the mathematical formulation of PECO and highlights several of its key properties by comparing it with existing modeling frameworks for optimization under uncertainty, such as CCO, the robust approximation of CCO, and DRO.

\subsection{Formulations of PECO}
\subsubsection{A general formulations}
Let $\mathcal{E}$ denote the set of all events and let $\mathbb{P}[E]$ denote the probability of event $E$, we have the following definition.
\begin{definition}[Probable event] \label{def:pe}
    An event $E \in \mathcal{E}$ is called a probable event if $\mathbb{P}[E]\ge \alpha$, where $\alpha$ is a user-defined probability threshold and the probability distribution of $E$ may be \textit{unknown}\footnote{Given that the literature often uses $\mathbb{P}_P[E]$ to denote the probability of event $E$ under probability distribution $P$, we remove the subscript $P$ to indicate that the proposed PECO modeling paradigm does not require knowledge of $P$.}. 
\end{definition}
The general PECO model is then
\begin{subequations}  \label{PECO1}
\begin{align} 
 \min_{x\in \mathbb{R}^n} \; & f(x)  \label{obj_PECO} \\
 \mathrm{s.t.} \;  & g(x,y(E)) \le 0,\;\begin{cases} 
 &\forall E \in \mathcal{E}_\alpha = \{ E \in \mathcal{E} \,|\,\mathbb{P}[E] \ge \alpha,\,\text{and the} \\& \text{probability distribution of}\,E\, \text{may be unknown}.\} \end{cases} \label{con_PECO}
\end{align} 
\end{subequations}
where $x$ is the decision vector, $y$ represents the vector of parameters that are associated with an uncertain event $E$, $f:\mathbb{R}^n \rightarrow \mathbb{R}$, and $g:\mathbb{R}^n \times \Upxi \rightarrow \mathbb{R}^m$ are smooth functions. This formulation enforces feasibility for all probable events—a requirement stronger than the probabilistic guarantee in CCO. Constraint (\ref{con_PECO}) is  referred to as Probable Event Constraint (PEC).

\subsubsection{A special case}
This paper focuses on a special case of the above PECO formulation, where events correspond to realizations of a random vector $\xi$, i.e., $E:y=\xi$. Such uncertain events are common in engineering systems. For example, in power systems, uncertain outputs of renewable generators define such events \cite{li2021uncertainty}. When $\mathbb{P}[\xi]$ is used to denote $\mathbb{P}[y=\xi]$ for simplicity, model (\ref{PECO1}) becomes
\begin{subequations} \label{PECO2}
\begin{align} 
\text{\textbf{PECO}:}\; \min_{x\in \mathbb{R}^n} \; & (\text{\ref{obj_PECO}})  \nonumber \\
 \mathrm{s.t.} \;  & g(x,\xi) \le 0,\;\forall \xi \in \Upxi_\alpha = \{ \xi \in \Upxi \,|\,\mathbb{P}[\xi] \ge \alpha\} \label{PEC} 
\end{align}
\end{subequations} %,\,(P\in \mathcal{P} \, \text{or is unknown})
where $\Upxi \subset (\mathbb{Z}^{u_1},\mathbb{R}^{u_2})$ ($u_1+u_2=u$) and the probability distribution of event $y=\xi$ may be \textit{unknown}. The PECO model then ensures feasibility across all realizations of $\xi$ whose probability exceeds $\alpha$. Denoting the feasible set of PECO in the $x$-space as:
\begin{equation} \label{feasisetpccsp}
    \mathcal{X}_{\rm P}(\alpha)=\{ x \in \mathbb{R}^n \mid g(x,\xi) \le 0,\;\forall \xi \in \Upxi_\alpha\},
\end{equation}
we have the following proposition.
\begin{proposition} \label{pro:feasisetpccsp}
    $\mathcal{X}_{\rm P}(\alpha_1) \supseteq \mathcal{X}_{\rm P}(\alpha_2)$ if $\alpha_1 \ge \alpha_2$.
\end{proposition}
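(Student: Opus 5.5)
The argument is a monotonicity statement that follows from two nested inclusions. First the index sets of probable events shrink as the threshold rises; then a constraint system indexed by a smaller set has a larger feasible set. I will verify these in order, working directly from the definitions of $\Upxi_\alpha$ in (\ref{PEC}) and $\mathcal{X}_{\rm P}(\alpha)$ in (\ref{feasisetpccsp}). No earlier result is needed beyond Definition \ref{def:pe}.

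\textbf{Step 1 (nesting of probable-event sets).} Fix $\alpha_1 \ge \alpha_2$ and take any $\xi \in \Upxi_{\alpha_1}$. By definition $\xi\in\Upxi$ and $\mathbb{P}[\xi] \ge \alpha_1$. Since $\alpha_1 \ge \alpha_2$, transitivity of $\ge$ gives $\mathbb{P}[\xi] \ge \alpha_2$, so $\xi \in \Upxi_{\alpha_2}$. Hence $\Upxi_{\alpha_1} \subseteq \Upxi_{\alpha_2}$.

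This step uses only the numerical value $\mathbb{P}[\xi]$ of the true, possibly unknown, distribution. Knowledge of that distribution is never required, which is consistent with the footnote to Definition \ref{def:pe}.

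\textbf{Step 2 (reversal of inclusion for feasible sets).} Take any $x \in \mathcal{X}_{\rm P}(\alpha_2)$, so that $g(x,\xi)\le 0$ for every $\xi \in \Upxi_{\alpha_2}$. By Step 1, every $\xi \in \Upxi_{\alpha_1}$ also lies in $\Upxi_{\alpha_2}$, so $g(x,\xi)\le 0$ holds for all $\xi\in\Upxi_{\alpha_1}$. Therefore $x\in\mathcal{X}_{\rm P}(\alpha_1)$, which gives $\mathcal{X}_{\rm P}(\alpha_2)\subseteq\mathcal{X}_{\rm P}(\alpha_1)$. This is the claimed inclusion $\mathcal{X}_{\rm P}(\alpha_1) \supseteq \mathcal{X}_{\rm P}(\alpha_2)$.

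\textbf{Edge case.} If $\Upxi_{\alpha_1}$ is empty, the constraint in $\mathcal{X}_{\rm P}(\alpha_1)$ holds vacuously. Then $\mathcal{X}_{\rm P}(\alpha_1)=\mathbb{R}^n$ and the inclusion is trivial.

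The only point needing care is Step 2, because the inclusion direction flips: fewer constraints give a larger feasible set. I would state this flip explicitly so the reader is not confused by the reversed order of $\alpha_1,\alpha_2$ in the statement.
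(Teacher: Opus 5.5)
Your proposal is correct and follows essentially the same route as the paper: first show $\Upxi_{\alpha_1}\subseteq\Upxi_{\alpha_2}$, then observe that the larger index set imposes more constraints and so yields a smaller feasible set. The paper phrases the second step by splitting the constraints of $\mathcal{X}_{\rm P}(\alpha_2)$ over $\Upxi_{\alpha_1}$ and $\Upxi_{\alpha_2}\setminus\Upxi_{\alpha_1}$; you argue element-wise instead, which is the same argument.
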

\textit{Proof}: When $\alpha_1 \ge \alpha_2$, for an arbitrary outcome $\check{\xi}_{\rm s}$ of $\xi$, $\mathbb{P}[\xi=\check{\xi}_{\rm s}] \ge \alpha_2$ if $\mathbb{P}[\xi=\check{\xi}_{\rm s}] \ge \alpha_1$. In other words,  $\Upxi_{\alpha_1} \subseteq \Upxi_{\alpha_2}$. Then,
\begin{align}
    \mathcal{X}_{\rm P}(\alpha_2)&=\left\{x \in \mathbb{R}^n \middle\vert \begin{array}{l}
        g(x,y) \le 0,\;\forall y\in \Upxi_{\alpha_1} \\
        g(x,y) \le 0,\;\forall y\in \Upxi_{\alpha_2} \setminus \Upxi_{\alpha_1}
    \end{array} \right\} \nonumber \\
    &=\{x \in \mathcal{X}_{\rm P}(\alpha_1) \mid g(x,y) \le 0,\;\forall y\in \Upxi_{\alpha_2} \setminus \Upxi_{\alpha_1} \} \nonumber
  \\
    & \subseteq \mathcal{X}_{\rm P}(\alpha_1). \nonumber
\end{align}
 \hfill $\Box$

\subsection{Relation to Existing Modeling Frameworks}
The logical guarantees of the proposed PECO model and the existing models, such as RO, CCO, and DRO, were compared in the second column of Table 1. This subsection aims to reveal more differences between PECO and these existing models.
\subsubsection{Compared to RO}
PECO, whose key element is the PEC as given in (\ref{PEC}), has a different logic meaning from the classic RO as explained in Section 1 and Table \ref{tb:comparison}. While there are many variants of RO \cite{ben2000robust,bertsimas2004price,wang2016likelihood,ben2013robust}, a standard formulation is given as
\begin{equation}
    \text{\text{Classic RO}:}\quad\min_{x} \;  (\text{\ref{obj_PECO}}) \quad \mathrm{s.t.}\; g(x,\xi) \le 0, \; \forall \xi \in \Upxi. \label{RO}
\end{equation}
Although PECO and RO share a similar mathematical structure, an important difference lies between $\Upxi$ and $\Upxi_\alpha$. Specifically, 
\begin{enumerate}
    \item $\Upxi$ is a deterministic set, whereas $\Upxi_\alpha$ is probabilistic.
    \item $\Upxi_\alpha$ may be nonconvex and even disconnected, even when $\Upxi$ is convex and the distribution is simple.
    \item The geometric properties of $\Upxi_\alpha$ can change as $\alpha$ varies.
\end{enumerate}
An example of $\Upxi_\alpha$ is provided in Figure \ref{fig:contour}, where $\xi_1$ and $\xi_2$ follow bimodal distributions. To be specific, $\mathbb{P}[\xi_1] \sim [0.5\mathcal{N}(-2,1)+0.5\mathcal{N}(3,1.8)]$ and $\mathbb{P}[\xi_2] \sim [0.5\mathcal{N}(0,1.2)+0.5\mathcal{N}(5,1.6)]$, where $\mathcal{N}(\mu,\sigma^2)$ is a normal distribution with $\mu$ as the mean and $\sigma^2$ as the variance. Although the mathematical expression of $\Upxi_\alpha$ can be derived explicitly for this example (see (\ref{upxialpha})), it remains highly nonconvex and may even become disconnected as $\alpha$ changes. This example illustrates that, in addition to the inherent nonconvexity of system constraints, the set $\Upxi_\alpha$ itself can exhibit severe nonconvexity. Moreover, constructing $\Upxi_\alpha$ is often nontrivial in real-world applications, regardless of whether it is convex or nonconvex.
 \begin{equation}\label{upxialpha}
 \begin{aligned}
    \Upxi_\alpha=&\{(\xi_1,\xi_2)\in \Upxi\,|\, (\frac{0.5}{\sqrt{2\pi}}e^{-(\xi_1+2)^2} +\frac{0.5}{\sqrt{3.6\pi}}e^{-\frac{(\xi_1-3)^2}{1.8}})\\
     &(\frac{0.5}{\sqrt{2.4\pi}}e^{-\frac{\xi_2^2}{1.2}}+ \frac{0.5}{\sqrt{3.2\pi}}e^{-\frac{(\xi_2-5)^2}{1.6}})\ge \alpha \}. 
\end{aligned}
 \end{equation}

\begin{figure}[h]
    \centering
    \includegraphics[width=0.388\linewidth]{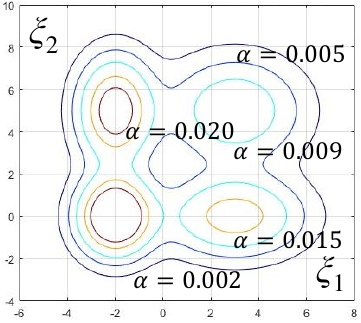}
    \caption{An illustrative example of $\Upxi_\alpha$}
    \label{fig:contour}
\end{figure}
\subsubsection{Compared to CCO}
 While CCO has various forms \cite{pagnoncelli2009sample,geletu2013advances,gopalakrishnan2021solving}, our discussion is based on the following standard formulation:
 \begin{equation}
      \text{\textbf{Classic CCO}:}\quad\min_{x\in \mathbb{R}^n} \;  (\text{\ref{obj_PECO}}) \quad\quad \mathrm{s.t.}\; \mathbb{P}[g(x,\xi) \le 0] \ge 1-\beta .\label{CCO}
 \end{equation}
The logical meanings of CCO and PECO can be compared as follows:
\begin{itemize}
    \item The chance constraint in (\ref{CCO}) requires that the constraint is satisfied with high probability (i.e., $\ge 1- \beta$).
    \item PEC (\ref{PEC}) enforces that the constraint must hold for all probable realizations, i.e., every realization that occurs with probability at least $\alpha$.
\end{itemize}
If the probability distribution $P(\xi)$ is known, a deterministic formulation of the feasible space of CCO (\ref{CCO}) in the $x$-space is given as 
\begin{equation} \label{feasisetcccsp1}
    \mathcal{X}_{\rm C}=\left\{ x \in \mathbb{R}^n \middle| 
\int\cdots\int_{ \mathcal{M}(x)}P(\xi)d\xi_u\cdots d\xi_1\ge 1-\beta \right\},
\end{equation}
where $$\mathcal{M}(x):=\{\xi \in \Upxi|g(x,\xi) \le 0, x \in \mathbb{R}^n\}$$ denotes the set of realizations of $\xi$ for which the constraint is satisfied for a given $x$. We have the following propositions. 
\begin{proposition}[on the relations between $\mathcal{X}_{\rm P}$ and $\mathcal{X}_{\rm C}$] \label{pro:CD}
     If
    \begin{equation} \label{CD}
  \alpha = \arg_v \left\{ \int\cdots\int_{\{\xi \in \Upxi|P(\xi)\ge v\}}P(\xi)d\xi_u\cdots d\xi_1=1-\beta \right\} ,
\end{equation}
where $\arg$ means the argument of a function, we have the following relations:
\begin{enumerate}
    \item  $\mathcal{X}_{\rm P} \subseteq \mathcal{X}_{\rm C}$;
    \item  $\mathcal{X}_{\rm P} = \mathcal{X}_{\rm C}$ if, for arbitrary realizations $\xi^{(a)} \in \mathcal{M}(x)$ and $\xi^{(b)} \in \Upxi \setminus \mathcal{M}(x)$, it holds that $P(\xi^{(a)}) \ge P(\xi^{(b)})$.
\end{enumerate}
\end{proposition}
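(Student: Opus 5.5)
The argument works with the superlevel set $\Upxi_\alpha=\{\xi\in\Upxi\mid P(\xi)\ge\alpha\}$, where $\alpha$ is chosen by (\ref{CD}) so that $\int_{\Upxi_\alpha}P(\xi)\,d\xi=1-\beta$. Throughout, $\mathbb{P}[\xi]$ in the definition of $\Upxi_\alpha$ is read as the density value $P(\xi)$; this is the reading under which (\ref{CD}) makes sense. Both claims then follow from how $\Upxi_\alpha$ sits relative to the satisfaction set $\mathcal{M}(x)$.

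\emph{Part 1.} Take any $x\in\mathcal{X}_{\rm P}$. By (\ref{feasisetpccsp}), $g(x,\xi)\le 0$ for every $\xi\in\Upxi_\alpha$, so $\Upxi_\alpha\subseteq\mathcal{M}(x)$. Since $P\ge 0$, integration is monotone in the domain, and therefore
\begin{equation*}
\int\cdots\int_{\mathcal{M}(x)}P(\xi)\,d\xi_u\cdots d\xi_1\;\ge\;\int\cdots\int_{\Upxi_\alpha}P(\xi)\,d\xi_u\cdots d\xi_1\;=\;1-\beta .
\end{equation*}
The equality is exactly (\ref{CD}). Hence $x\in\mathcal{X}_{\rm C}$ by (\ref{feasisetcccsp1}), which proves $\mathcal{X}_{\rm P}\subseteq\mathcal{X}_{\rm C}$.

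\emph{Part 2.} Given Part 1, only $\mathcal{X}_{\rm C}\subseteq\mathcal{X}_{\rm P}$ remains. Fix $x\in\mathcal{X}_{\rm C}$ satisfying the ordering hypothesis. The goal is to show $\Upxi_\alpha\subseteq\mathcal{M}(x)$.

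Set $t=\inf_{\mathcal{M}(x)}P$. The hypothesis says $P(\xi)\ge P(\xi')$ whenever $\xi\in\mathcal{M}(x)$ and $\xi'\notin\mathcal{M}(x)$, so $P\le t$ on the complement $\Upxi\setminus\mathcal{M}(x)$. Consequently $\mathcal{M}(x)$ is sandwiched between superlevel sets of $P$: $\{P>t\}\subseteq\mathcal{M}(x)\subseteq\{P\ge t\}$.

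Now suppose, for contradiction, that $t<\alpha$. Then $\mathcal{M}(x)\supseteq\{P>t\}\supseteq\Upxi_\alpha$, which is the desired conclusion, so this case is harmless. The real case to exclude is $t>\alpha$. There $\mathcal{M}(x)\subseteq\{P\ge t\}$, so the mass of $\mathcal{M}(x)$ is at most $\int_{\{P\ge t\}}P$. The mass function $v\mapsto\int_{\{P\ge v\}}P$ is nonincreasing in $v$, so $\int_{\{P\ge t\}}P\le\int_{\{P\ge\alpha\}}P=1-\beta$. Meanwhile $x\in\mathcal{X}_{\rm C}$ requires this mass to be $\ge 1-\beta$, so equality must hold throughout. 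This means $P$ puts zero mass on $\{\alpha\le P<t\}$. But that set has positive density on a set of positive measure if it is nonempty, and the definition of $\alpha$ as the level attaining $1-\beta$ is meant to pick exactly this threshold. Taking $\alpha$ as the largest $v$ solving (\ref{CD}) then gives the contradiction. Hence $t\le\alpha$ and $\Upxi_\alpha\subseteq\mathcal{M}(x)$.

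The boundary case $t=\alpha$ needs a separate check. If $\mathcal{M}(x)$ omits points where $P=\alpha$, those points form a level set, which has measure zero for a continuous density. Also, every point $\xi$ with $P(\xi)>\alpha$ lies in $\mathcal{M}(x)$, since otherwise $P(\xi)\le t=\alpha$.

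\textbf{Main obstacle.} The difficult step is this measure-theoretic tie-breaking: flat regions of $P$, non-uniqueness of the $\arg$ in (\ref{CD}), and level sets of measure zero. As written, the statement only yields $\mathcal{X}_{\rm C}\subseteq\mathcal{X}_{\rm P}$ up to null sets. I would handle this by assuming the level sets of $P$ have measure zero, which makes the mass function continuous and strictly decreasing on its range, so that $\alpha$ is unique. I would state this assumption explicitly in the proof.
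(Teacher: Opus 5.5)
Your Part 1 is the paper's argument. The paper writes it as $\mathbb{P}[g(x,\check{\xi})\le 0]\ge\mathbb{P}[P(\check{\xi})\ge\alpha]=1-\beta$, which is your monotonicity of the integral under $\Upxi_\alpha\subseteq\mathcal{M}(x)$.

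For Part 2 you take a different route, and a better one. The paper asserts that the separating threshold equals $\alpha$ and then reasons from points $\hat{\xi}\in\mathcal{M}(x)$. That only yields $\mathcal{M}(x)\subseteq\Upxi_\alpha$, which is the wrong inclusion for $x\in\mathcal{X}_{\rm P}$. Moreover, ``$v=\alpha$'' is impossible whenever the chance constraint is slack at $x$, since then $t<\alpha$. You correctly reduce to $\Upxi_\alpha\subseteq\mathcal{M}(x)$ and split on $t=\inf_{\mathcal{M}(x)}P$. The case $t<\alpha$ is fine. The case $t>\alpha$ is also fine once $\alpha$ is fixed as the largest root of (\ref{CD}). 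That root exists because $m(v)=\int_{\{P\ge v\}}P$ is nonincreasing and left-continuous, and your squeeze then gives $m(t)=1-\beta$ with $t>\alpha$, a contradiction. Drop the claim that a nonempty $P$-null set $\{\alpha\le P<t\}$ cannot occur: it can be a single point, and the largest-root convention is what actually does the work.

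\textbf{The gap is the case $t=\alpha$, and your proposed repair does not close it.} There you only know $\{P>\alpha\}\subseteq\mathcal{M}(x)$, so points with $P(\xi)=\alpha$ may be missing from $\mathcal{M}(x)$. That they form a null set is irrelevant, because the PEC requires $g(x,\xi)\le 0$ for \emph{every} $\xi\in\Upxi_\alpha$, not almost every one. Null level sets and a unique $\alpha$ do not help. Here is a counterexample satisfying both:
\begin{itemize}
\item Take a smooth bimodal density (e.g.\ $0.7\mathcal{N}(0,1)+0.3\mathcal{N}(6,1)$) whose lower mode peaks at a strict local maximum $\xi_0$.
\item Set $\alpha=P(\xi_0)$ and $1-\beta=m(\alpha)$. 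Then $\alpha$ is the unique root of (\ref{CD}), all level sets are null, and $\xi_0$ is an isolated point of $\Upxi_\alpha$.
\item Let $g$ be independent of $x$, with $g=\alpha-P+\psi$, where $\psi\ge 0$ is a small smooth bump with $\psi(\xi_0)>0$, supported away from the rest of $\Upxi_\alpha$. Then $\{\xi \mid g(\xi)\le 0\}=\Upxi_\alpha\setminus\{\xi_0\}$.
\end{itemize}
The ordering hypothesis holds for every $x$, and the satisfied mass is exactly $1-\beta$. Hence $\mathcal{X}_{\rm C}=\mathbb{R}^n$, while $\mathcal{X}_{\rm P}=\emptyset$. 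Separately, null level sets give continuity of $m$, not strict monotonicity: if the range of $P$ on its support has a gap, $m$ is flat there and $\alpha$ is not unique.

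The missing ingredient is topological, not measure-theoretic. Since the paper takes $g$ smooth, $\mathcal{M}(x)$ is closed in $\Upxi$. So $\{P>\alpha\}\subseteq\mathcal{M}(x)$ upgrades to $\overline{\{P>\alpha\}}\subseteq\mathcal{M}(x)$. Now add the hypothesis $\{P=\alpha\}\subseteq\overline{\{P>\alpha\}}$, which is equivalent to saying no point at level $\alpha$ is a local maximum of $P$ on $\Upxi$. It holds, for example, if $\{P=\alpha\}$ lies in the interior of $\Upxi$ and $P$ is $C^1$ with $\nabla P\neq 0$ there. With this hypothesis, the case $t=\alpha$ gives $\Upxi_\alpha\subseteq\mathcal{M}(x)$. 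Combined with the largest-root convention, your case analysis then proves $\mathcal{X}_{\rm C}\subseteq\mathcal{X}_{\rm P}$ exactly. Without the hypothesis Part 2 is false, so the assumption belongs in the statement itself, not only in the proof.
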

\textit{Proof}: Consider the experiment in which a realization of $\xi$ is randomly drawn from the sample space $\Upxi$, and let $\check{\xi}$ denote this outcome. According to probability theory, the probability of event $\mathbb{P}[\xi=\check{\xi}]\ge \alpha$, i.e., $\mathbb{P}[\mathbb{P}[\xi=\check{\xi}]\ge \alpha]$, is given by 
\begin{equation}
    \mathbb{P}[\mathbb{P}[\xi=\check{\xi}]\ge \alpha]=\mathbb{P}[P(\check{\xi})\ge \alpha]=\int\cdots\int_{\{\xi \in \Upxi|P(\xi)\ge \alpha\}}P(\xi)d\xi_u\cdots d\xi_1.
\end{equation}
 Therefore, condition (\ref{CD}) implies that $$\mathbb{P}[P(\check{\xi})\ge \alpha]=1-\beta.$$ Let $x_{\rm P} \in \mathcal{X}_{\rm P}$ denote an arbitrarily feasible solution of PECO, the logical meaning of PEC indicates that $g(x_{\rm P},\check{\xi}) \le 0$ whenever $\mathbb{P}[\xi=\check{\xi}]\ge \alpha$. Consequently, the probability of constraint satisfaction is not less than the probability of event $\mathbb{P}[\xi=\check{\xi}]\ge \alpha$, i.e., $$\mathbb{P}[g(x_{\rm P},\check{\xi}) \le 0] \ge \mathbb{P}[P(\check{\xi})\ge \alpha]=1-\beta.$$ Therefore, according to the logical meaning of the chance constraint, we have $x_{\rm P} \in \mathcal{X}_{\rm C}$ which implies that $\mathcal{X}_{\rm P} \subseteq \mathcal{X}_{\rm C}$.

For the second statement, the condition ``$P(\xi^{(a)}) \ge P(\xi^{(b)})$ for arbitrary realizations $\xi^{(a)} \in \mathcal{M}(x)$ and $\xi^{(b)} \in \Upxi \setminus \mathcal{M}(x)$" implies that there exists a threshold $v$ such that $P(\xi^{(a)}) \ge v$ and $P(\xi^{(b)}) \le v$. This implies   
\begin{equation} \label{proofCD}
    P(\xi)\ge v,\; \forall \xi \in \mathcal{M}(x)
\end{equation}
It's not hard to know that $v = \alpha$ in (\ref{proofCD}) under condition (\ref{CD}). Let $\hat{\xi}$ be an arbitrary realization of $\xi$ in $\mathcal{M}(x)$, we have $\mathbb{P}[\xi=\hat{\xi}]\ge \alpha$. Let $x_{\rm C}$ denote an arbitrarily feasible solution of CCO satisfying the above condition, we have $g(x_{\rm C},\hat{\xi}_{\rm s}) \le 0$, which implies that $x_{\rm C} \in \mathcal{X}_{\rm P}$ and, namely, $\mathcal{X}_{\rm P} \supseteq \mathcal{X}_{\rm C}$. Therefore, we have $\mathcal{X}_{\rm P} = \mathcal{X}_{\rm C}$.

\hfill $\Box$

Proposition \ref{pro:CD} indicates that PECO (\ref{PECO2}) can be viewed as a \emph{safe} alternative to CCO (\ref{CCO}) in terms of feasible space. To be specific, although the classic chance constraint ensures that feasible solutions achieve a satisfactory probability of constraint satisfaction, it cannot guarantee that the optimal solution is feasible for a probable realization of $\xi$, which is often undesirable in engineering applications. An advantage of PECO is that it guarantees both properties.

\subsubsection{Compared to the robust approximation of CCO}
When the functions $f$ and $g$ are convex, some existing work approximates the classic CCO (\ref{CCO}) by the following RO problem \cite{nemirovski2007convex,nemirovski2009robust}:
\begin{equation}
    \text{\textbf{RO-CCO}:}\quad\min_{x\in \mathbb{R}^n} \;  f(x) \quad\quad \mathrm{s.t.}\; g(x,\xi) \le 0, \; \forall \xi \in \mathcal{U}\subset \Upxi \label{ROCCO}
\end{equation}
where $\mathcal{U}$ is convex such that the robust constraint $g(x,\xi) \le 0 \; (\forall \xi \in \mathcal{U})$ is an inner approximation of the chance constraint $\mathbb{P}[g(x,\xi) \le 0] \ge 1-\beta$. The proposed PECO modeling paradigm differs from the RO-CCO method in various aspects. First, $\Upxi_\alpha$ has a fundamentally different logic meaning from $\mathcal{U}$ and is not necessarily convex. Second, existing research on RO-CCO typically does not consider nonconvex $\mathcal{U}$ since convexity of $\mathcal{U}$ is essential for maintaining tractability. 

Strictly speaking, RO-CCO is not a modeling paradigm for optimization under uncertainty like RO, CCO, DRO, and PECO. Instead, It is just a solution method of CCO which leverages the computational tractability of the RO formulation under the condition of strict convexity. Consequently, existing research on RO-CCO primarily focuses on constructing a convex set $\mathcal{U}$ such that $g(x,\xi) \le 0 \; (\forall \xi \in \mathcal{U})$ is a tight inner approximation of the chance constraint $\mathbb{P}[g(x,\xi) \le 0] \ge 1-\beta$.

\subsubsection{Compared to DRO}
  While DRO has many variants \cite{delage2010distributionally,mohajerin2018data,hanasusanto2015distributionally,calafiore2006distributionally,hu2013kullback}, a standard formulation is given as
 \begin{equation} \label{DRO}
     \text{DRO:}\quad\min_{x} \;  \max_{\forall P \in \mathcal{P}}\mathbb{E}[f(x,\xi)] \quad \mathrm{s.t.}\; g(x) \le 0,
 \end{equation}
 where $\mathcal{P}$ represents the ambiguity set.
 DROs can generally be classified into two categories: distributionally robust stochastic optimization (DRSO) and distributionally robust chance-constrained optimization (DRCCO). A central focus of existing research on DRO lies in constructing a suitable ambiguity set with the following objectives: 1) the uncertainty can be properly captured, and 2) the resulting deterministic approximation is computationally tractable. 
 
 DRO has recently attracted significant attention because it seeks robustness with respect to a family of probability distributions rather than a single distribution. However, in practice, DRO can be inconvenient for real-world applications. First, constructing the ambiguity set still requires distributional information and assumptions—such as moments or support—which, if accurately known, effectively specify the underlying distribution. Second, the resulting bilevel formulations are computationally demanding and typically require strong convexity assumptions to be tractable. In contrast, PECO does not rely on this information thanks to the DESP. Solutions to DRO problems typically rely on mathematical approximation, such as dual reformulation, while the DESP of PECO (described in the next section) is completely based on data. 

 \subsection{Summary}
PECO offers important advantages, while also introducing significant challenges. On the one hand, PECO is logically well aligned with engineering applications, such as power system optimization, as it guarantees feasibility for all high-probability realizations under unknown distributions. On the other hand, PECO poses substantial computational challenges when addressed using existing solution methods, which are not designed to handle the complexity of~$\Upxi_\alpha$. These challenges motivate the development of a novel solution paradigm—DESP—for the efficient solution of PECO, as described in the next section.

\section{Data-embedded Programming}\label{sec3}
When the mathematical expression of $\Upxi_\alpha$ is available, and both $g$ and $\Upxi_\alpha$ are convex, Wald's minimax method can be applied to reformulate PECO (\ref{PECO2}) as a single-level deterministic program. However, since the minimax method was originally designed for linear problems, it may become computationally intractable or even fail to produce meaningful solutions for nonlinear and nonconvex problems, as it is fundamentally based on worst-case satisfaction. Meanwhile, as we enter the era of big data, large volumes of data are increasingly available. Motivated by this, we propose a DESP for PECO to overcome the limitations of the minimax approach. In this paradigm, a DEP model is introduced as a deterministic reformulation of PECO.

\subsection{Mathematical Formulation}
\subsubsection{Preliminary}
To better reveal the properties of DEP, several key terminologies are defined in Table \ref{tab:terminologies}. An illustrative example is provided in Figure \ref{fig:illu_examp1}, where the uncertain vector $\xi =[\xi_1,\xi_2]^{\rm T}$ and the uncertainty set is given by $$\Upxi=\{\xi  \in \mathbb{Z}^2 | \xi_1^2+\xi_2^2-4\xi_1-4\xi_2\le -5.9\}.$$ This example considers 100 data points that are measured in history, forming the data set $\mathcal{D}=\{\xi^{(k)}_{\rm d},\; k=1,\ldots,100\}$. Suppose the scenarios are ordered as $\xi^{(1)}_{\rm s}=(1,1)$, $\xi^{(2)}_{\rm s}=(1,2)$, $\xi^{(3)}_{\rm s}=(1,3)$, $\xi^{(4)}_{\rm s}=(2,1)$, $\xi^{(5)}_{\rm s}=(2,2)$, $\xi^{(6)}_{\rm s}=(2,3)$, $\xi^{(7)}_{\rm s}=(3,1)$, $\xi^{(8)}_{\rm s}=(3,2)$, and $\xi^{(9)}_{\rm s}=(3,3)$, the set of all scenarios is $\mathcal{S}^\forall=\{\xi^{(k)}_{\rm s},\; k=1,\ldots,9\}$ which is a finite set since $\xi_1$ and $\xi_2$ are two finite discrete parameters. From Figure \ref{fig:illu_examp1}, scenario $\xi^{(2)}_{\rm s}$ occurred 9 times in the data set $\mathcal{D}$, i.e., $\xi^{(j)}_{\rm d}=\xi^{(2)}_{\rm s}$ for $j=1,\ldots,9$. Its empirical probability is therefore $\mathbb{P}[\xi=\xi^{(2)}_{\rm s}]=9/100=0.09$, and probabilities for other scenarios can be computed similarly. If $\alpha=0.1$, the set of probable scenarios $\mathcal{S}^\forall_\alpha=\{\xi^{(k)}_{\rm s},\; k=4, 5, 6, 8\}$. For this example, the explicit mathematical expression of $\Upxi_\alpha$ may not exist. The set $\mathcal{D}_\alpha$ contains all data points equal to $\xi^{(k)}_{\rm s}$ ($k=4, 5, 6, 8$) and $|\mathcal{D}_\alpha|=85$. It follows that $\mathcal{S}^\forall_\alpha=\mathfrak{U}[\mathcal{D}_\alpha]$ where $\mathfrak{U}[\cdot]$ denotes the underlying set of a multiset\footnote{A underlying set is the set of distinct elements of a multiset.}.
\begin{table}[h]
\footnotesize
\centering
\caption{Key terminologies.}
\vspace{-6pt}
\label{tab:terminologies}
\begin{tabular}{@{}p{2.98cm}p{0.68cm}p{7.8cm}@{}}
\hline
\multicolumn{1}{c}{\textbf{Terminology}}        & \multicolumn{1}{c}{\textbf{Notation}} & \multicolumn{1}{c}{\textbf{Meaning}}                                                                                                                                                                                                                                                                                \\ \hline
\textbf{Scenario}                               & \multicolumn{1}{c}{$\xi_{\rm s}$}                         &  A possible outcome of $\xi$ that a probability is assigned to event $\xi=\xi_{\rm s}$, which can represent a set of data points.                                                                                                                                                                                 \\
\textbf{Data Point}                             & \multicolumn{1}{c}{$\xi_{\rm d}$}                         & The value of $\xi$ that is measured in history, which can be considered an independent and identically distributed (i.i.d.) sample.                                                                                                                                                                                    \\
\textbf{Probable scenario}                      & \multicolumn{1}{c}{--}                                    & A scenario of $\xi$ that satisfies $\mathbb{P}[\xi=\xi_{\rm s}] \ge \alpha$,  where $\alpha$ is a user-defined probability threshold.                                                                                                                                                                               \\
\textbf{Probable data point}                    & \multicolumn{1}{c}{--}                                    & A data point $\xi_{\rm d}$ of $\xi$ that satisfies $\xi_{\rm d}=\xi_{\rm s}$ is called a probable data point, where $\xi_{\rm s}$ is a probable scenario.                                                                                                                                                           \\
\textbf{Scenario set}                           & \multicolumn{1}{c}{$\mathcal{S}$}                         & $\{\xi^{(k)}_{\rm s},\; k=1,\ldots,S\}$: A set of scenarios where each of its elements is unique, i.e., $\xi^{(i)}_{\rm s} \neq \xi^{(j)}_{\rm s}$ if $i \neq j$ ($1 \le i,j\le S$).                                                                                                                                \\
\textbf{Data set}                               & \multicolumn{1}{c}{$\mathcal{D}$}                         & $\{\xi^{(k)}_{\rm d},\; k=1,\ldots,D\}$: a \textbackslash{}textbf\{finite multiset\} of data points of $\xi$, where there may be multiple instances for each of its elements, that is, $\xi^{(i)}_{\rm d} = \xi^{(j)}_{\rm d}$ is possible even if $i \neq j$ ($1 \le i,j\le S$). Note that $D$ is generally large. \\
\textbf{Particle representation of $\Upxi$}     & \multicolumn{1}{c}{$\mathcal{S}^\forall$}                 & A scenario set that contains all scenarios of $\xi$. $\mathcal{S}^\forall$ is not a multiset and generally \textbf{infinite} for continuous or mixed-integer $\xi$.                                                                                                                                      \\ \hline
\end{tabular}
\end{table}

\begin{figure}
    \centering
    \includegraphics[width=4.8cm]{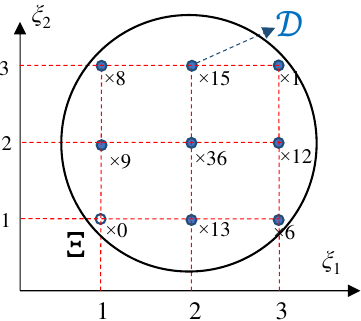}
%\vspace{-6pt}
\caption{An illustrative example of scenarios and data points.}
\label{fig:illu_examp1}

\end{figure}

\subsubsection{Formulation}
Suppose that a historical data set $\mathcal{D}=\{\xi^{(k)}_{\rm d},\; k=1,\ldots,D\}$ is available, where each data point is an independent and identically distributed (iid) sample of $\xi$. Let $\mathcal{D}_\alpha \subseteq \mathcal{D}$ denote the subset of probable data points (as defined in Table 2). We define the DEP approximation of PECO as
\begin{align} \label{DDA}
\text{\textbf{DEP}}:\quad\quad \min_{x\in \mathbb{R}^n} \quad & (\text{\ref{obj_PECO}})  \nonumber \\
 \mathrm{s.t.} \quad  &  g(x,\xi_{\rm d}^{(k)}) \le 0,\;   (\forall \xi_{\rm d}^{(k)}  \in \mathcal{D}_{\rm emb} \subseteq \mathfrak{U}[\mathcal{D}_\alpha]) 
\end{align} 
where $\mathcal{D}_{\rm emb}$ represents the embedded data set which is a subset of $\mathfrak{U}[\mathcal{D}_\alpha]$. Note that a data point differs from a scenario in the context of DESP. 

\subsubsection{Some properties}
Using ``DEP($\Box$)" to denote the data-embedded optimization model (\ref{DDA}) where $\mathcal{D}_{\rm emb}=\Box$, 
\begin{equation} \label{FS}
    \mathcal{X}_{\rm D}(\Box)=\{ x \in \mathbb{R}^n \mid g(x,\xi_{\rm d}^{(k)}) \le 0\;   (\forall \xi_{\rm d}^{(k)}  \in \Box)\}
\end{equation}
and $x^*_{\rm D}(\Box)$ to represent its feasible set in the $x$-space and optimal solution, respectively, the following proposition reveals some basic properties of the proposed DEP.
\begin{proposition}\label{pro:underlying}
    Considering an arbitrary data set $\mathcal{D}_1$,
 \begin{enumerate}
%\vspace{-6pt}
\itemsep0em 
    \item $\mathcal{X}_{\rm D}(\mathcal{D}_2) \supseteq \mathcal{X}_{\rm D}(\mathcal{D}_1)$ if $\mathcal{D}_2 \subseteq \mathcal{D}_1$;
    \item there exists at least one subset $\mathcal{D}_3$ of $\mathcal{D}_1$ that satisfies $\mathcal{X}_{\rm D}(\mathcal{D}_3)=\mathcal{X}_{\rm D}(\mathcal{D}_1)$;
    \item there exists at least one subset $\mathcal{D}_4$ of $\mathcal{D}_1$ that satisfies $x^*_{\rm D}(\mathcal{D}_4)=x^*_{\rm D}(\mathcal{D}_1)$;
    \item when $\mathcal{D}_1$ is a multiset, $\mathcal{X}_{\rm D}(\mathcal{D}_5)=\mathcal{X}_{\rm D}(\mathcal{D}_1)$ if $\mathcal{D}_5$ is the underlying set of $\mathcal{D}_1$, i.e., $\mathcal{D}_5=\mathfrak{U}[\mathcal{D}_1]$.
     %\vspace{-6pt}
\end{enumerate}   
\end{proposition}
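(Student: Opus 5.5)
All four items follow directly from the definition (\ref{FS}), which describes $\mathcal{X}_{\rm D}(\Box)$ as an intersection of the sets $\{x \mid g(x,\xi)\le 0\}$ indexed by the data points in $\Box$. I would therefore treat the proposition as a set of elementary facts about intersections, proving each item in turn and reusing the earlier ones. Throughout I take $\mathcal{D}_1$ to be finite, as stated in Table \ref{tab:terminologies}.

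For item 1, suppose $\mathcal{D}_2 \subseteq \mathcal{D}_1$ and fix any $x \in \mathcal{X}_{\rm D}(\mathcal{D}_1)$. Then $g(x,\xi_{\rm d}^{(k)})\le 0$ holds for every $\xi_{\rm d}^{(k)}\in\mathcal{D}_1$. In particular it holds for every element of $\mathcal{D}_2$, so $x\in\mathcal{X}_{\rm D}(\mathcal{D}_2)$. This is the same argument as in Proposition \ref{pro:feasisetpccsp}: enlarging the index set of the intersection can only shrink it.

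For item 4, the multiset $\mathcal{D}_1$ and its underlying set $\mathfrak{U}[\mathcal{D}_1]$ contain exactly the same distinct values. Repeating the constraint $g(x,\xi)\le 0$ for a repeated value $\xi$ does not change the set of $x$ satisfying it. The two intersections are therefore taken over the same family of sets, and they coincide.

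Items 2 and 3 are existence claims, and for these I would use trivial witnesses. For item 2, choose $\mathcal{D}_3=\mathcal{D}_1$, since the statement only requires a subset, not a proper one. A more informative witness is $\mathcal{D}_3=\mathfrak{U}[\mathcal{D}_1]$, which works by item 4. If desired, one can also obtain an inclusion-minimal such subset: starting from $\mathcal{D}_1$, repeatedly delete any data point whose removal leaves $\mathcal{X}_{\rm D}$ unchanged. Finiteness guarantees this process terminates. For item 3, again choose $\mathcal{D}_4=\mathcal{D}_1$, or any $\mathcal{D}_3$ from item 2. Equal feasible sets together with the same objective $f$ give the same optimal solution set.

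The main subtlety is item 3 when the optimizer is not unique. In that case $x^*_{\rm D}(\cdot)$ should be read as the set of optimal solutions, or as the output of a fixed deterministic selection rule, so that equality is well defined. The trivial witness avoids the issue entirely. The more interesting strengthening, a strictly smaller $\mathcal{D}_4$ that preserves optimality but not feasibility, would require a pruning argument in which a point is deleted only if the optimizer remains feasible and optimal. I would present that only as a remark, since the statement as written does not need it.
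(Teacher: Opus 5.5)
Your proposal is correct and follows the paper's proof essentially step for step. The paper likewise drops the constraints indexed by $\mathcal{D}_1\setminus\mathcal{D}_2$ for item 1, uses the trivial witnesses $\mathcal{D}_3=\mathcal{D}_4=\mathcal{D}_1$ for items 2 and 3, and removes the redundant repeated constraints for item 4. One caution on an optional extra: your alternative witness for item 3 ("any $\mathcal{D}_3$ from item 2") works only if $x^*_{\rm D}$ means the set of global minimizers, because the paper lets $x^*_{\rm D}$ depend on the solver's trajectory; the trivial witness you lead with avoids this issue.
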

\textit{Proof}: If $\mathcal{D}_2 \subseteq \mathcal{D}_1$, then 
\begin{align}
    \mathcal{X}_{\rm D}(\mathcal{D}_1)&=\left\{x \in \mathbb{R}^n \middle\vert \begin{array}{l}
        g(x,\xi_{\rm d}^{(i)}) \le 0,\;\forall \xi_{\rm d}^{(k)}\in \mathcal{D}_2 \\
        g(x,\xi_{\rm d}^{(j)}) \le 0,\;\forall \xi_{\rm d}^{(j)}\in \mathcal{D}_1 \setminus \mathcal{D}_2
    \end{array} \right\} \nonumber \\
    &=\{x \in \mathcal{X}_{\rm D}(\mathcal{D}_2) \mid g(x,\xi_{\rm d}^{(j)}) \le 0,\;\forall \xi_{\rm d}^{(j)}\in \mathcal{D}_1 \setminus \mathcal{D}_2 \} \nonumber
  \\
    & \subseteq \mathcal{X}_{\rm D}(\mathcal{D}_2). \nonumber
\end{align}
The existence of $\mathcal{D}_3$ and $\mathcal{D}_4$ follows directly by letting $\mathcal{D}_3=\mathcal{D}_4=\mathcal{D}_1$.

For the fourth statement, let $\mathcal{D}_5=\{\xi_{\rm d}^{(i)}\,(i=1,\ldots,I)\}$ be the underlying set of $\mathcal{D}_1$, and represent $\mathcal{D}_1$ as $\{\xi_{\rm d}^{(i,j)}\,(i=1,\ldots,I;\,j=1,\ldots,J_i)\}$, where $\xi_{\rm d}^{(i,1)}=\cdots=\xi_{\rm d}^{(i,J_i)}$ for each $i$. Then, DEP($\mathcal{D}_1$) can be written as
 \begin{subequations} \label{itm1}
\begin{align}
 \min_{x\in \mathbb{R}^n} \quad & (\text{\ref{obj_PECO}})  \\
 \mathrm{s.t.} \quad  &g(x,\xi^{(i,1)}_{\rm d}) \le 0 \label{constr1_itm1} \\
 &g(x,\xi^{(i,j)}_{\rm d}) \le 0,\;(j=2,\ldots,J_i) \label{constr2_itm1} 
\end{align} 
\end{subequations}
where $i =1,\ldots,I$. Since $\xi^{(i,1)}_{\rm d}=,\ldots,=\xi^{(i,J_i)}_{\rm d}$ ($\forall i =1,\ldots,I$), constraints in (\ref{constr2_itm1}) are redundant and can be removed without affecting the solution. The resulting problem is equivalent to DEP($\mathcal{D}_5$), implying $\mathcal{X}_{\rm D}(\mathcal{D}_5)=\mathcal{X}_{\rm D}(\mathcal{D}_1)$.
 
 \hfill $\Box$

\subsection{Relation to PECO}
The DEP model (\ref{DDA}) can be a deterministic approximation or even an equivalent of PECO depending on the embedded data set $\mathcal{D}_{\rm emb}$.
\subsubsection{Logical equivalence}
 Recalling $\mathcal{S}_\alpha^{\forall}$ which was defined in Table \ref{tab:terminologies}, we introduce the following definition. 
\begin{definition}[Deterministic equivalent of PECO]\label{def:equivalent}
    When $\mathcal{D}_{\rm emb}=\mathcal{S}_\alpha^{\forall}$, DEP is considered a deterministic equivalent of PECO, i.e., $\mathcal{X}_{\rm D}(\mathcal{S}_\alpha^{\forall})=\mathcal{X}_{\rm P}$.
\end{definition}
In other words, if $\mathcal{D}_{\rm emb}$ contains all probable scenarios (i.e., $\mathcal{D}_{\rm emb}=\mathcal{S}_\alpha^{\forall}$, the full set of realizations with probability $\ge \alpha$), then DEP is exactly equivalent to PECO (\ref{PECO2}). Nevertheless, DEP($\mathcal{S}_\alpha^{\forall}$) may involve infinitely many constraints when $\xi$ is continuous or mixed-integer, since $|\mathcal{S}_\alpha^{\forall}|=\infty$. To obtain a finite deterministic equivalent of PECO, we introduce the concepts of \emph{boundary-forming data set (BFDS)} and \emph{solution-determining data set (SDDS)}, from the perspectives of the feasible set and the optimal solution, respectively.

\subsubsection{Finite deterministic equivalent via the BFDS concept}

\begin{definition}[BFDS of the feasible space of DEP] \label{def:adpfs}
    A data set $\mathcal{B}^{\Box}=\{\xi_{\rm d}^{(k)},\,k=1,\ldots,B^{\Box} \}$ is called a BFDS of feasible set $\mathcal{X}_{\rm D}(\Box)$ if it is the SMALLEST subset of $\Box$ that satisfies:
    \begin{equation} \label{condition_adpfs}
        \mathcal{X}_{\rm D}(\mathcal{B}^{\Box}) = \{ x \in \mathbb{R}^n \mid g(x,\xi_{\rm d}^{(k)}) \le 0\;   (\forall \xi_{\rm d}^{(k)} \in \mathcal{B}^{\Box}) \}=\mathcal{X}_{\rm D}(\Box).
    \end{equation}
\end{definition}
 Following Definition \ref{def:adpfs}, let $\mathcal{B}^{\mathcal{S}_\alpha^{\forall}}$ denote the BFDS of $\mathcal{X}_{\rm D}(\mathcal{S}_\alpha^{\forall})$. For a general uncertain vector $\xi$ (i.e., $\xi$ can be integer, continuous, or mixed-integer), we have the following proposition. 
\begin{proposition}[Relations between the feasible set of DEP and PECO] \label{pro:adpfs}
    We have $\mathcal{X}_{\rm D} \supseteq \mathcal{X}_{\rm P}$ and, if and only if $\mathcal{B}^{\mathcal{S}_\alpha^{\forall}} \subseteq \mathcal{D}_{\rm emb}$, $\mathcal{X}_{\rm D}(\mathcal{D}_{\rm emb}) = \mathcal{X}_{\rm P}$.
\end{proposition}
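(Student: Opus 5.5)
The plan is to reduce everything to the monotonicity in Proposition \ref{pro:underlying} and the equivalence in Definition \ref{def:equivalent}, namely $\mathcal{X}_{\rm P}=\mathcal{X}_{\rm D}(\mathcal{S}_\alpha^{\forall})$. First, the inclusion $\mathcal{X}_{\rm D}\supseteq\mathcal{X}_{\rm P}$. By construction $\mathcal{D}_{\rm emb}\subseteq\mathfrak{U}[\mathcal{D}_\alpha]$. Every element of $\mathfrak{U}[\mathcal{D}_\alpha]$ is a probable data point, hence equal to some probable scenario, so $\mathfrak{U}[\mathcal{D}_\alpha]\subseteq\mathcal{S}_\alpha^{\forall}$. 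Item 1 of Proposition \ref{pro:underlying} with $\mathcal{D}_2=\mathcal{D}_{\rm emb}$ and $\mathcal{D}_1=\mathcal{S}_\alpha^{\forall}$ then gives $\mathcal{X}_{\rm D}(\mathcal{D}_{\rm emb})\supseteq\mathcal{X}_{\rm D}(\mathcal{S}_\alpha^{\forall})=\mathcal{X}_{\rm P}$.

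For the "if" direction, assume $\mathcal{B}^{\mathcal{S}_\alpha^{\forall}}\subseteq\mathcal{D}_{\rm emb}$. Item 1 of Proposition \ref{pro:underlying} gives $\mathcal{X}_{\rm D}(\mathcal{D}_{\rm emb})\subseteq\mathcal{X}_{\rm D}(\mathcal{B}^{\mathcal{S}_\alpha^{\forall}})$. By (\ref{condition_adpfs}) the right-hand side equals $\mathcal{X}_{\rm D}(\mathcal{S}_\alpha^{\forall})=\mathcal{X}_{\rm P}$. Combined with the first inclusion, this yields equality.

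The "only if" direction is the main obstacle, because the BFDS is defined only as a smallest subset and need not be unique. For instance, two distinct data points could induce the same constraint set. My plan is to argue by contrapositive: suppose some $\hat\xi\in\mathcal{B}^{\mathcal{S}_\alpha^{\forall}}\setminus\mathcal{D}_{\rm emb}$ exists, yet $\mathcal{X}_{\rm D}(\mathcal{D}_{\rm emb})=\mathcal{X}_{\rm P}$. Then $\mathcal{D}_{\rm emb}$ is itself a subset of $\mathcal{S}_\alpha^{\forall}$ satisfying (\ref{condition_adpfs}), so by minimality $|\mathcal{D}_{\rm emb}|\ge B^{\mathcal{S}_\alpha^{\forall}}$. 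Cardinality alone does not give a contradiction, so I would use minimality elementwise. Since $\mathcal{B}^{\mathcal{S}_\alpha^{\forall}}$ is smallest, removing $\hat\xi$ strictly enlarges the feasible set. That is, there is an $\bar x$ satisfying $g(\bar x,\xi)\le0$ for all $\xi\in\mathcal{B}^{\mathcal{S}_\alpha^{\forall}}\setminus\{\hat\xi\}$ but violating $g(\bar x,\hat\xi)\le0$. The task is then to show that some such $\bar x$ lies in $\mathcal{X}_{\rm D}(\mathcal{D}_{\rm emb})$, which would contradict equality with $\mathcal{X}_{\rm P}$.

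This step holds only if the BFDS is interpreted canonically. I would make that explicit by taking $\mathcal{B}^{\mathcal{S}_\alpha^{\forall}}$ to be the set of points whose constraint is non-redundant in the strong sense: each $\hat\xi$ in it has a witness $\bar x$ violating only $\hat\xi$'s constraint among all of $\mathcal{S}_\alpha^{\forall}$. Under this interpretation the BFDS is unique and every $\mathcal{D}\subseteq\mathcal{S}_\alpha^{\forall}$ with $\mathcal{X}_{\rm D}(\mathcal{D})=\mathcal{X}_{\rm P}$ must contain it. The witness $\bar x$ satisfies all constraints in $\mathcal{D}_{\rm emb}\subseteq\mathcal{S}_\alpha^{\forall}\setminus\{\hat\xi\}$, so $\bar x\in\mathcal{X}_{\rm D}(\mathcal{D}_{\rm emb})\setminus\mathcal{X}_{\rm P}$, completing the contrapositive. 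I expect most of the care to go into justifying, or stating as a standing convention, this uniqueness of the BFDS.
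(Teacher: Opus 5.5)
Your inclusion $\mathcal{X}_{\rm D}(\mathcal{D}_{\rm emb})\supseteq\mathcal{X}_{\rm P}$ and your ``if'' direction are exactly the paper's argument. For the ``only if'' direction, your suspicion is justified. The paper adjoins $\tilde\xi\in\mathcal{B}^{\mathcal{S}_\alpha^{\forall}}\setminus\mathcal{D}_{\rm emb}$ to $\mathcal{D}_{\rm emb}$. All the work then lies in the case $\mathcal{X}_{\rm D}(\mathcal{D}_{\rm emb})=\mathcal{X}_{\rm D}(\mathcal{D}_{\rm emb}\cup\{\tilde\xi\})$, where the paper concludes that $\tilde\xi$ could be dropped from $\mathcal{B}^{\mathcal{S}_\alpha^{\forall}}$. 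That inference does not follow: redundancy of $\tilde\xi$ relative to $\mathcal{D}_{\rm emb}$ says nothing about redundancy relative to $\mathcal{B}^{\mathcal{S}_\alpha^{\forall}}\setminus\{\tilde\xi\}$. Your duplicate-constraint example genuinely refutes the statement under Definition \ref{def:adpfs}. So an extra hypothesis is unavoidable, and your elementwise contrapositive is the right way to use one.

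The gap is in the hypothesis you chose. Write $\mathcal{N}$ for your set of strongly non-redundant points. You correctly show that $\mathcal{N}\subseteq\mathcal{D}$ for every $\mathcal{D}\subseteq\mathcal{S}_\alpha^{\forall}$ with $\mathcal{X}_{\rm D}(\mathcal{D})=\mathcal{X}_{\rm P}$. However, your ``if'' direction relies on (\ref{condition_adpfs}), i.e.\ $\mathcal{X}_{\rm D}(\mathcal{N})=\mathcal{X}_{\rm P}$, and $\mathcal{N}$ need not satisfy it; in that case it is not a BFDS at all.

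Take $n=1$, $g(x,\xi)=(\xi_1-x,\;x-\xi_2)$, and $\mathcal{S}_\alpha^{\forall}=\{a,b,c\}$ with $a=(0,1)$, $b=(0,5)$, $c=(-5,1)$. Then $\mathcal{X}_{\rm P}=[0,1]$. Any $x$ satisfying two of the three constraints lies in $[0,1]$, so no point is strongly non-redundant and $\mathcal{N}=\emptyset$. Your ``if'' direction would then assert that $\mathcal{X}_{\rm D}(\{b\})=[0,5]$ equals $[0,1]$.

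The same example shows that uniqueness of the BFDS, which you name as the thing to justify, is not enough. Here $\{a\}$ is the unique smallest subset satisfying (\ref{condition_adpfs}), yet $\mathcal{D}_{\rm emb}=\{b,c\}$ gives $\mathcal{X}_{\rm D}(\mathcal{D}_{\rm emb})=\mathcal{X}_{\rm P}$ without containing it.

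The hypothesis you need, and which must be assumed rather than proved, is $\mathcal{X}_{\rm D}(\mathcal{N})=\mathcal{X}_{\rm P}$. Equivalently, the subsets of $\mathcal{S}_\alpha^{\forall}$ satisfying (\ref{condition_adpfs}) have a least element under inclusion, which is then the BFDS and coincides with $\mathcal{N}$. Under this hypothesis both of your directions go through as written. Be aware that when $g$ is continuous in $\xi$, $\mathcal{N}$ can contain only isolated points of $\mathcal{S}_\alpha^{\forall}$, so this is essentially a discrete-uncertainty assumption.
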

Proof: Since $\mathcal{S}_\alpha^{\forall}$ is defined as the set that contains all scenarios of $\Upxi_\alpha$, we have $\mathfrak{U}[\mathcal{D}_\alpha] \subseteq \mathcal{S}_\alpha^{\forall}$. Thus, $\mathcal{X}_{\rm D}(\mathcal{D}_{\rm emb})\supseteq\mathcal{X}_{\rm D}(\mathfrak{U}[\mathcal{D}_\alpha]) \supseteq \mathcal{X}_{\rm D}(\mathcal{S}_\alpha^{\forall})$ where the inclusions follow from Part 1 of Propositions \ref{pro:underlying} and \ref{pro:feasisetpccsp}. By Definition \ref{def:equivalent}, $\mathcal{X}_{\rm D}(\mathcal{S}_\alpha^{\forall})=\mathcal{X}_{\rm P}$ which implies $\mathcal{X}_{\rm D}(\mathcal{D}_{\rm emb}) \supseteq \mathcal{X}_{\rm P}$. Condition $\mathcal{B}^{\mathcal{S}_\alpha^{\forall}} \subseteq \mathcal{D}_{\rm emb}$ indicates that $\mathcal{X}_{\rm D}(\mathcal{B}^{\mathcal{S}_\alpha^{\forall}}) \supseteq \mathcal{X}_{\rm D}(\mathcal{D}_{\rm emb})$. By Definition \ref{def:adpfs}, we have $\mathcal{X}_{\rm D}(\mathcal{B}^{\mathcal{S}_\alpha^{\forall}})=\mathcal{X}_{\rm D}(\mathcal{S}_\alpha^{\forall})=\mathcal{X}_{\rm P}$, which means $\mathcal{X}_{\rm P}\supseteq \mathcal{X}_{\rm D}(\mathcal{D}_{\rm emb})$. Then, we have $\mathcal{X}_{\rm D}(\mathcal{D}_{\rm emb}) = \mathcal{X}_{\rm P}$.

If $\mathcal{X}_{\rm D}(\mathcal{D}_{\rm emb}) = \mathcal{X}_{\rm P}$, we have $\mathcal{X}_{\rm D}(\mathcal{D}_{\rm emb}) =\mathcal{X}_{\rm D}(\mathcal{S}_\alpha^{\forall})$. Assuming $\mathcal{B}^{\mathcal{S}_\alpha^{\forall}} \not \subseteq \mathcal{D}_{\rm emb}$ and letting $\Tilde{\xi}_{\rm d} \not \in \mathcal{D}_{\rm emb}$ be an element in $\mathcal{B}^{\mathcal{S}_\alpha^{\forall}}$ and $\mathcal{D}_{\rm emb}^\prime=\mathcal{D}_{\rm emb} \bigcup\{ \Tilde{\xi}_{\rm d}\}$, there are two possible relations between $\mathcal{X}_{\rm D}(\mathcal{D}_{\rm emb})$ and $\mathcal{X}_{\rm D}(\mathcal{D}_{\rm emb}^\prime)$, i.e., $\mathcal{X}_{\rm D}(\mathcal{D}_{\rm emb})=\mathcal{X}_{\rm D}(\mathcal{D}_{\rm emb}^\prime)$ and $\mathcal{X}_{\rm D}(\mathcal{D}_{\rm emb})\supset\mathcal{X}_{\rm D}(\mathcal{D}_{\rm emb}^\prime)$. In the first relation, we have $\mathcal{X}_{\rm D}(\mathcal{D}_{\rm emb})=\mathcal{X}_{\rm D}(\mathcal{D}_{\rm emb}^\prime)=\mathcal{X}_{\rm D}(\mathcal{S}_\alpha^{\forall})$, which implies that removing $\Tilde{\xi}_{\rm d}$ from $\mathcal{B}^{\mathcal{S}_\alpha^{\forall}}$ (or $\mathcal{S}_\alpha^{\forall}$) does not affect the feasible set. This feature contradicts Definition \ref{def:adpfs} that $\mathcal{B}^{\mathcal{S}_\alpha^{\forall}}$ is the smallest subset of $\mathcal{S}_\alpha^{\forall}$ which satisfies condition (\ref{condition_adpfs}). Moreover, we know that $\mathcal{X}_{\rm D}(\mathcal{D}_{\rm emb}^\prime) \supseteq \mathcal{X}_{\rm D}(\mathcal{S}_\alpha^{\forall})$ since $\mathcal{D}_{\rm emb}^\prime \subseteq \mathcal{S}_\alpha^{\forall}$. The second relation contradicts the precondition of $\mathcal{X}_{\rm D}(\mathcal{D}_{\rm emb}) = \mathcal{X}_{\rm P}$. Hence, we have $\mathcal{B}^{\mathcal{S}_\alpha^{\forall}} \subseteq \mathcal{D}_{\rm emb}$ if $\mathcal{X}_{\rm D}(\mathcal{D}_{\rm emb}) = \mathcal{X}_{\rm P}$.

 \hfill $\Box$

  \begin{figure}[h!]
    \centering
    \includegraphics[width=0.5\linewidth]{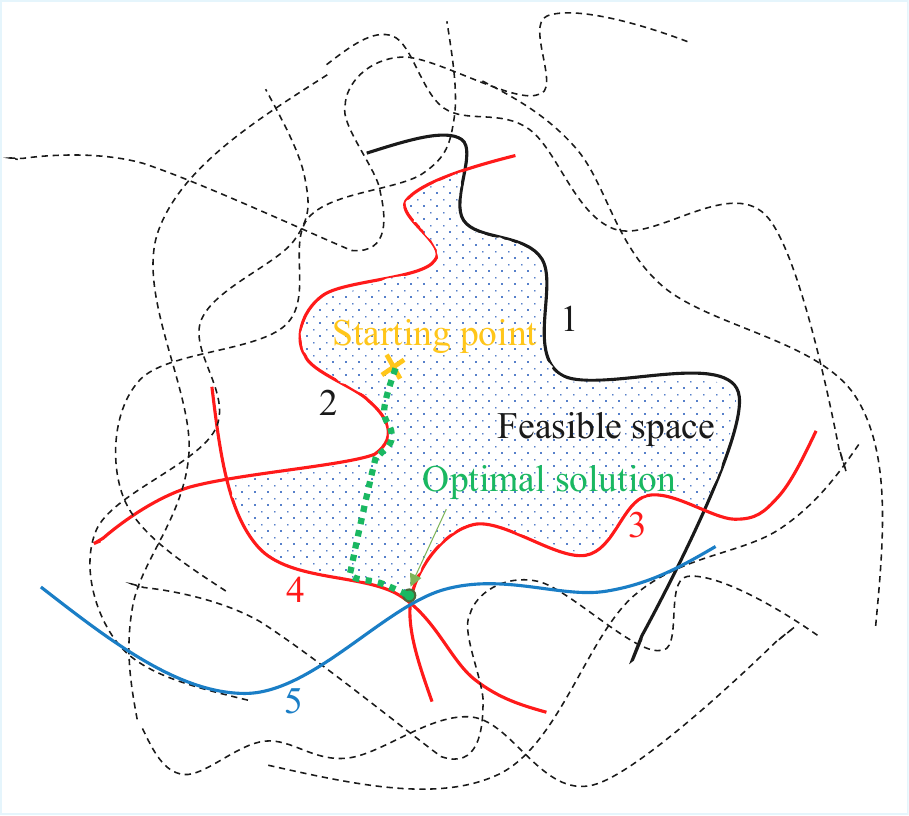}
    \caption{A pictorial interpretation of boundary-forming data points and the family of restricting data sets}
    \label{fig:activedata}
\end{figure}
  A pictorial interpretation of the BFDS concept is provided in Figure \ref{fig:activedata}, where constraints 1-4 are the boundary-forming ones. A boundary-forming data point is a data point that contributes at least one boundary-forming constraint. In other words, the BFDS of this case consists of the data points that contribute constraints 1-4. Proposition \ref{pro:underlying} guarantees the existence of $\mathcal{B}^{\Box}$ for DEP($\Box$). As illustrated in Figure \ref{fig:activedata}, the feasible space of DEP is typically determined by a limited number of boundary-forming data points, while the remaining data points are inactive and can be removed without affecting the feasible set. Proposition \ref{pro:adpfs} further implies that if the embedded data set $\mathcal{D}_{\rm emb}$ contains the whole BFDS of the deterministic equivalent DEP($\mathcal{S}_\alpha^{\forall}$), then the finite problem DEP($\mathcal{D}_{\rm emb}$) is equivalent to PECO. However, identifying the exact BFDS for a complex PECO problem is generally challenging. The next subsection introduces the concept of the SDDS family, based on which a finite deterministic approximation of PECO, along with an associated accuracy metric, can be developed.

\subsection{The concept of SDDS family}

\subsubsection{Definition and key properties of SDDS family}
The DEP model (\ref{DDA}) is essentially a nonlinear programming problem whose solution may be influenced by various factors, including but not limited to the numerical algorithms/solvers employed, the choice of initial solutions, the descent directions induced by the objective function, and the geometry of the feasible region. Since the choice of numerical algorithms/solvers and initial solutions is beyond the scope of this work, these factors are treated as fixed when defining the SDDS family. Let $x^*_{\rm D}(\Box)$ denote the optimal solution of DEP($\Box$). We then have the following definition.

\begin{definition}[SDDS family] \label{def:rdsf}
    Under a fixed numerical algorithm/solver and a given starting solution, a data set $\mathcal{R}_i^{\Box}=\{\xi^{(k)}_{\rm d},\; k=1,\ldots,R_i^{\Box}\} \subseteq \Box$ is called a solution-determining data set of DEP($\Box$) if:
    \begin{enumerate}
        \item $x^*_{\rm D}(\mathcal{R}_i^{\Box})=x^*_{\rm D}(\Box)$; and
        \item removing any data point from $\mathcal{R}_i^{\Box}$ changes the solution, i.e., $x^*_{\rm D}(\mathcal{R}_i^{\Box}\setminus\{\xi^{(k)}_{\rm d}\}) \neq x^*_{\rm D}(\Box)$ for all $k =1,\ldots,R_i^{\Box}$.
    \end{enumerate}
 The SDDS of DEP($\Box$) may not be unique and all such sets form the SDDS family $\mathfrak{R}^\Box=\{\mathcal{R}_i^{\Box}\,(i=1,\ldots,R) \}$. 
\end{definition}

SDDS families generally possess the following key properties:
\begin{itemize}
    \item The SDDS family is defined under a fixed numerical algorithm/solver and starting solution, since different choices may lead to different solutions $x^*_{\rm D}(\Box)$ and hence different $\mathfrak{R}^\Box$.
    \item An element of an SDDS (i.e., a solution-determining data point) may correspond to an active data point at $x^*_{\rm D}(\Box)$, where an active data point contributes at least one active constraint at the optimal solution. In the example shown in Figure \ref{fig:activedata}, the data points corresponding to constraints 3, 4, and 5 are solution-determining.
    \item An SDDS element may also correspond to a data point whose associated constraints influence the search trajectory of the numerical solver, e.g., constraint 2 in Figure \ref{fig:activedata}.
    \item The SDDS family $\mathfrak{R}^\Box$ may be empty. This occurs, for example, when none of the inequality constraints is binding.
    \item Let $\mathcal{R}^{\Box}_\prime$ be a subset of an SDDS $\mathcal{R}^{\Box}$. By Definition \ref{def:rdsf}, the only subset satisfying $x^*_{\rm D}(\mathcal{R}^{\Box}_\prime)=x^*_{\rm D}(\mathcal{R}^{\Box})$ is $\mathcal{R}^{\Box}$ itself. That is, all elements in an SDDS are jointly necessary to determine the optimal solution.
    \item If multiple SDDSs exist, then $\mathcal{R}_i^{\Box}\neq \mathcal{R}_j^{\Box}$ for $i \neq j$, and their intersection may be nonempty. In the example in Figure \ref{fig:activedata}, $\mathcal{R}_1=\{\xi^{(2)}_{\rm d},\,\xi^{(3)}_{\rm d},\,\xi^{(4)}_{\rm d}\}$ and $\mathcal{R}_2=\{\xi^{(2)}_{\rm d},\,\xi^{(4)}_{\rm d},\,\xi^{(5)}_{\rm d}\}$ form $\mathfrak{R}=\{\mathcal{R}_1,\, \mathcal{R}_2\}$, where $\xi^{(i)}_{\rm d}$ represents the data point that contributes to constraint $i$.
\end{itemize}

While BFDS is defined from the perspective of the feasible set, the SDDS family is defined from that of the optimal solution. Similar to BFDS, identifying the SDDS family a priori is generally difficult. In the remainder of this paper, we aim to make the SDDS concept more practical for solving DEP approximations of PECO by developing solution methods that rely on weaker assumptions. In the next subsection, we derive a finite DEP approximation with an accuracy metric based on an estimate of the SDDS cardinalities.

\subsubsection{Finite DEP approximation with accuracy metric via SDDS family}

Recall that DEP($\mathcal{S}_\alpha^{\forall}$) is a deterministic equivalent of PECO, whereas DEP($\mathcal{D}_{\rm emb}$) is a finite deterministic approximation. The former is generally an infinite problem, while the latter is finite. Suppose $\mathcal{D}_{\rm emb}$ consists of $z$ data points randomly selected from $\mathcal{S}_\alpha^{\forall}$, this subsection studies the relationship between $x^*_{\rm D}(\mathcal{D}_{\rm emb})$ and $x^*_{\rm D}(\mathcal{S}_\alpha^{\forall})$. Let $\mathfrak{R}^{\mathcal{S}_\alpha^{\forall}}=\{\mathcal{R}_1^{\mathcal{S}_\alpha^{\forall}},\ldots,\mathcal{R}_R^{\mathcal{S}_\alpha^{\forall}} \}$ denote the SDDS family of DEP($\mathcal{S}_\alpha^{\forall}$), we impose the following condition.

\textbf{Condition 1}. All $x^*_{\rm D}$'s are obtained by the same algorithm/solver and the same starting point.
\begin{proposition} \label{pro:adpos2}
    Under condition 1, $x^*_{\rm D}(\mathcal{D}_{\rm emb})=x^*_{\rm P}$ if and only if $\mathcal{D}_{\rm emb} \supseteq \mathcal{R}_i^{\mathcal{S}_\alpha^{\forall}}$ ($\exists i \in \{1,\dots,R \}$).
\end{proposition}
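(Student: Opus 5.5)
The plan is to prove the two directions separately. Throughout I identify $x^*_{\rm P}$ with $x^*_{\rm D}(\mathcal{S}_\alpha^{\forall})$; this is justified by Definition \ref{def:equivalent}, which gives $\mathcal{X}_{\rm D}(\mathcal{S}_\alpha^{\forall})=\mathcal{X}_{\rm P}$, together with Condition 1, which fixes the algorithm and starting point. I will also use that $\mathcal{D}_{\rm emb}\subseteq\mathfrak{U}[\mathcal{D}_\alpha]\subseteq\mathcal{S}_\alpha^{\forall}$, as established in the proof of Proposition \ref{pro:adpfs}.

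\emph{Sufficiency.} Suppose $\mathcal{R}_i^{\mathcal{S}_\alpha^{\forall}}\subseteq\mathcal{D}_{\rm emb}$ for some $i$. Then
\[
\mathcal{R}_i^{\mathcal{S}_\alpha^{\forall}}\subseteq\mathcal{D}_{\rm emb}\subseteq\mathcal{S}_\alpha^{\forall}.
\]
Part 1 of Proposition \ref{pro:underlying} gives the sandwich
\[
\mathcal{X}_{\rm D}(\mathcal{R}_i^{\mathcal{S}_\alpha^{\forall}})\supseteq\mathcal{X}_{\rm D}(\mathcal{D}_{\rm emb})\supseteq\mathcal{X}_{\rm D}(\mathcal{S}_\alpha^{\forall}).
\]
By item 1 of Definition \ref{def:rdsf}, the solver returns the same point $x^*_{\rm P}$ on the outer problem DEP($\mathcal{R}_i^{\mathcal{S}_\alpha^{\forall}}$) and on the inner problem DEP($\mathcal{S}_\alpha^{\forall}$). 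Because the intermediate feasible set contains the inner one, $x^*_{\rm P}$ is feasible for DEP($\mathcal{D}_{\rm emb}$). Because it is contained in the outer one, the objective value $f(x^*_{\rm P})$ is a lower bound over $\mathcal{X}_{\rm D}(\mathcal{D}_{\rm emb})$, at least when the solver attains the minimum. Hence $x^*_{\rm P}$ is optimal for DEP($\mathcal{D}_{\rm emb}$), and Condition 1 is used to conclude that the solver actually returns this same point, i.e.\ $x^*_{\rm D}(\mathcal{D}_{\rm emb})=x^*_{\rm P}$.

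\emph{Necessity.} Suppose $x^*_{\rm D}(\mathcal{D}_{\rm emb})=x^*_{\rm P}$. I will extract an SDDS of DEP($\mathcal{S}_\alpha^{\forall}$) from $\mathcal{D}_{\rm emb}$ by greedy pruning. Start from $\mathcal{T}_0=\mathcal{D}_{\rm emb}$, which is finite and already satisfies $x^*_{\rm D}(\mathcal{T}_0)=x^*_{\rm D}(\mathcal{S}_\alpha^{\forall})$. At each step, if some point can be removed without changing the returned solution, remove it; otherwise stop. Finiteness guarantees termination at a set $\mathcal{T}\subseteq\mathcal{D}_{\rm emb}$ that satisfies both conditions of Definition \ref{def:rdsf} relative to the solution $x^*_{\rm D}(\mathcal{S}_\alpha^{\forall})$. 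Since $\mathcal{T}\subseteq\mathcal{S}_\alpha^{\forall}$, it is an SDDS of DEP($\mathcal{S}_\alpha^{\forall}$), so $\mathcal{T}=\mathcal{R}_i^{\mathcal{S}_\alpha^{\forall}}$ for some $i$. This gives $\mathcal{D}_{\rm emb}\supseteq\mathcal{R}_i^{\mathcal{S}_\alpha^{\forall}}$. The degenerate case of an empty SDDS (no binding constraints) is handled by the same argument with $\mathcal{T}=\emptyset$.

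\emph{Main obstacle.} The hardest step is the end of the sufficiency direction: a solver's output need not be determined by the feasible set alone. I will handle it by reading Condition 1 together with Definition \ref{def:rdsf} as saying that the returned solution is a deterministic function of the problem instance. On that reading, if the same point is optimal on both an outer and an inner problem, the fixed solver returns it on every intermediate problem. If a fully rigorous version is needed, I would add an explicit assumption that the solver returns the unique global minimizer, so that the sandwich argument alone suffices.
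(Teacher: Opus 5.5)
Your argument is essentially the paper's argument made explicit. The sandwich $\mathcal{X}_{\rm D}(\mathcal{R}_i^{\mathcal{S}_\alpha^{\forall}})\supseteq\mathcal{X}_{\rm D}(\mathcal{D}_{\rm emb})\supseteq\mathcal{X}_{\rm D}(\mathcal{S}_\alpha^{\forall})$ is what underlies the paper's claim that deleting the constraints of $\mathcal{S}_\alpha^{\forall}\setminus\mathcal{R}_j^{\mathcal{S}_\alpha^{\forall}}$ ``does not change the optimal solution until the problem reduces to DEP($\mathcal{R}_j^{\mathcal{S}_\alpha^{\forall}}$)''. Your greedy pruning of the finite set $\mathcal{D}_{\rm emb}$ is what actually produces the minimal subset. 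The paper instead cites Part~3 of Proposition~\ref{pro:underlying}, whose proof only gives the trivial choice $\mathcal{D}_4=\mathcal{D}_1$, not a minimal one. Your necessity direction is therefore sound and tighter than the paper's. That includes treating $\emptyset$ as an SDDS when condition~2 of Definition~\ref{def:rdsf} is vacuous. This is the reading the ``only if'' direction needs when no constraint binds; the paper's remark that the family is then empty would make that direction fail.

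The obstacle you flag in the sufficiency direction is real, and your first remedy does not remove it. Determinism of the solver under Condition~1 does not imply that a point returned on both DEP($\mathcal{R}_i^{\mathcal{S}_\alpha^{\forall}}$) and DEP($\mathcal{S}_\alpha^{\forall}$) is also returned on an intermediate problem.
\begin{itemize}
\item DEP($\mathcal{R}_i^{\mathcal{S}_\alpha^{\forall}}$) may have a second global minimizer $y\neq x^*_{\rm P}$ that is cut off only by points of $\mathcal{S}_\alpha^{\forall}\setminus\mathcal{D}_{\rm emb}$. A deterministic solver may break the tie differently on DEP($\mathcal{D}_{\rm emb}$) and return $y$.
\item A local solver can likewise be diverted by an extra constraint. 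This is exactly the trajectory effect Definition~\ref{def:rdsf} is designed to allow.
\end{itemize}

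Your second remedy is the one that works. Suppose the solver returns global minimizers and DEP($\mathcal{R}_i^{\mathcal{S}_\alpha^{\forall}}$) has a unique minimizer. Then any global minimizer $y$ of DEP($\mathcal{D}_{\rm emb}$) lies in $\mathcal{X}_{\rm D}(\mathcal{R}_i^{\mathcal{S}_\alpha^{\forall}})$ and satisfies $f(x^*_{\rm P})\le f(y)\le f(x^*_{\rm P})$, hence $y=x^*_{\rm P}$. That hypothesis is genuinely additional to Condition~1, and it narrows the solver-dependent setting the paper has in mind. The paper's proof relies on the same step without justification. So with the assumption stated explicitly, your proof is a rigorous version of the paper's rather than a different one.
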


\textit{Proof}: Given the condition $\mathcal{D}_{\rm emb} \supseteq \mathcal{R}_i^{\mathcal{S}_\alpha^{\forall}}$ ($\exists i \in \{1,\dots,R \}$), we assume that $\mathcal{D}_{\rm emb} \supseteq \mathcal{R}_j^{\mathcal{S}_\alpha^{\forall}}$. Then, we have $x^*_{\rm D}(\mathcal{R}_j^{\mathcal{S}_\alpha^{\forall}})=x^*_{\rm D}(\mathcal{S}_\alpha^{\forall})=x^*_{\rm P}$ by Definitions \ref{def:equivalent} and \ref{def:rdsf}. Thus, removing constraints corresponding to ($\mathcal{S}_\alpha^{\forall} \setminus \mathcal{R}_j^{\mathcal{S}_\alpha^{\forall}}$) from DEP($\mathcal{S}_\alpha^{\forall}$) does not change the optimal solution until the problem reduces to DEP($\mathcal{R}_j^{\mathcal{S}_\alpha^{\forall}}$) under condition 1. Given that $\mathcal{S}_\alpha^{\forall} \supseteq \mathcal{D}_{\rm emb}$ according to the logical meaning of $\mathcal{S}_\alpha^{\forall}$, we have $(\mathcal{S}_\alpha^{\forall} \setminus \mathcal{R}_j^{\mathcal{S}_\alpha^{\forall}}) \supseteq (\mathcal{S}_\alpha^{\forall} \setminus \mathcal{D}_{\rm emb})$. Hence, if DEP($\mathcal{S}_\alpha^{\forall}$) reduces to DEP($\mathcal{D}_{\rm emb}$), its optimal solution would not change, i.e., $x^*_{\rm D}(\mathcal{D}_{\rm emb})=x^*_{\rm D}(\mathcal{S}_\alpha^{\forall})=x^*_{\rm P}$. 

Under condition 1, if $x^*_{\rm D}(\mathcal{D}_{\rm emb})=x^*_{\rm P}$, we have $x^*_{\rm D}(\mathcal{D}_{\rm emb})=x^*_{\rm D}(\mathcal{S}_\alpha^{\forall})$, which implies that removing constraints corresponding to $(\mathcal{S}_\alpha^{\forall} \setminus \mathcal{D}_{\rm emb})$ from DEP($\mathcal{S}_\alpha^{\forall}$) does not change the solution. According to part 3 of  Proposition \ref{pro:underlying}, there exists a minimal subset $\mathcal{D}^{\min}_{\rm emb} \subseteq \mathcal{D}_{\rm emb}$ such that removing any element changes the solution. Hence, $\mathcal{D}^{\min}_{\rm emb}$ is an SDDS, implying $\mathcal{D}_{\rm emb}$ contains at least one SDDS.

\hfill $\Box$

Proposition \ref{pro:adpos2} implies that solving DEP($\mathcal{D}_{\rm emb}$)--a finite problem--can obtain the exact solution of PECO if $\mathcal{D}_{\rm emb}$ contains at least one SDDS of DEP($\mathcal{S}_\alpha^{\forall}$). However, since the SDDS family is unknown in advance, this condition is difficult to verify. Although it is hard to know which data points are the solution-determining ones, if we know the size of each subset of the SDDS family, we can use Theorem \ref{thm:varrho} to determine the needed $z$, i.e., the size of $\mathcal{D}_{\rm emb}$, under assumption 1.
\smallskip

\noindent
\textbf{Assumption 1}.The data set $\mathcal{D}$ is sufficiently large such that
\[
\mathfrak{U}[\mathcal{D}_\alpha] \supseteq \mathcal{R}_i^{\mathcal{S}_\alpha^{\forall}}, \quad \forall i \in \{1,\dots,R\}.
\]

Let $\varrho=\mathbb{P}[x^*_{\rm D}(\mathcal{D}_{\rm emb})=x^*_{\rm P}]$. Define subsets $\mathfrak{R}^{\mathcal{S}_\alpha^{\forall}}_j=\{ \mathcal{R}_i^{\mathcal{S}_\alpha^{\forall}},\,i \in \mathcal{J}_j\}$ for $j \in \mathcal{J}$, where $|\mathcal{J}|=2^R-1$. Let $\underline{D}_\alpha=|\mathfrak{U}[\mathcal{D}_\alpha]|$.

\begin{theorem}\label{thm:varrho}
    Under Assumption 1 and Condition 1, the probability of $x^*_{\rm D}(\mathcal{D}_{\rm emb})=x^*_{\rm P}$ is 
    \begin{equation} \label{varrho}
       \varrho(z)= \frac{1}{\binom{\underline{D}_\alpha}{z}}\sum_{j \in \mathcal{J}} (-1)^{J_j+1}\binom{\underline{D}_\alpha - \bar R_j^{\mathcal{S}_\alpha^{\forall}}}{z- \bar R_j^{\mathcal{S}_\alpha^{\forall}}} ,
    \end{equation}
  where $J_j=|\mathcal{J}_j|$, $\bar R_j^{\mathcal{S}_\alpha^{\forall}}=\left|\bigcup_{i \in \mathcal{J}_j} \mathcal{R}_i^{\mathcal{S}_\alpha^{\forall}}\right|$, and $z\ge\bar R_j^{\mathcal{S}_\alpha^{\forall}}$ ($\forall j \in \mathcal{J}$). 
\end{theorem}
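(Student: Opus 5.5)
The plan is to reduce the event $x^*_{\rm D}(\mathcal{D}_{\rm emb})=x^*_{\rm P}$ to a purely combinatorial event about random subsets, and then count that event by inclusion--exclusion.

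First we fix the sampling model. Read ``$z$ data points randomly selected'' as $\mathcal{D}_{\rm emb}$ being a uniformly random $z$-element subset of the finite ground set $\mathfrak{U}[\mathcal{D}_\alpha]$, which has cardinality $\underline{D}_\alpha$. There are $\binom{\underline{D}_\alpha}{z}$ such subsets, each equally likely. By part 4 of Proposition \ref{pro:underlying}, duplicates are irrelevant, so working with the underlying set loses nothing. By Assumption 1, every SDDS $\mathcal{R}_i^{\mathcal{S}_\alpha^{\forall}}$ is a subset of this ground set, so every event below is meaningful inside it.

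Next we translate the target event. Under Condition 1, Proposition \ref{pro:adpos2} gives $x^*_{\rm D}(\mathcal{D}_{\rm emb})=x^*_{\rm P}$ if and only if $\mathcal{D}_{\rm emb}\supseteq\mathcal{R}_i^{\mathcal{S}_\alpha^{\forall}}$ for some $i$. Writing $A_i=\{\mathcal{D}_{\rm emb}\supseteq \mathcal{R}_i^{\mathcal{S}_\alpha^{\forall}}\}$, we therefore have $\varrho(z)=\mathbb{P}[\bigcup_{i=1}^R A_i]$. The index sets $\mathcal{J}_j$, $j\in\mathcal{J}$, are exactly the $2^R-1$ nonempty subsets of $\{1,\dots,R\}$. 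The inclusion--exclusion principle then gives
\[
\varrho(z)=\sum_{j\in\mathcal{J}}(-1)^{J_j+1}\,\mathbb{P}\Big[\bigcap_{i\in\mathcal{J}_j}A_i\Big].
\]

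Now we evaluate each term. The intersection $\bigcap_{i\in\mathcal{J}_j}A_i$ is the event that $\mathcal{D}_{\rm emb}$ contains the union $\bigcup_{i\in\mathcal{J}_j}\mathcal{R}_i^{\mathcal{S}_\alpha^{\forall}}$, which has $\bar R_j^{\mathcal{S}_\alpha^{\forall}}$ elements. The $z$-subsets containing a fixed set of size $\bar R_j$ are obtained by choosing the remaining $z-\bar R_j$ elements from the other $\underline{D}_\alpha-\bar R_j$ elements. There are $\binom{\underline{D}_\alpha-\bar R_j}{z-\bar R_j}$ such subsets, and this count requires $z\ge \bar R_j$, which is the stated hypothesis. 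Dividing by $\binom{\underline{D}_\alpha}{z}$ and substituting into the inclusion--exclusion sum yields (\ref{varrho}).

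The step I expect to be the main obstacle is not the counting but the justification of the first two steps. One issue is that ``randomly selected'' must mean uniform sampling without replacement from $\mathfrak{U}[\mathcal{D}_\alpha]$; I would state this explicitly as the interpretation of the selection. The other is that Proposition \ref{pro:adpos2} is stated with $\mathcal{D}_{\rm emb}$ viewed as a subset of $\mathcal{S}_\alpha^{\forall}$; this holds because $\mathfrak{U}[\mathcal{D}_\alpha]\subseteq\mathcal{S}_\alpha^{\forall}$, as shown in the proof of Proposition \ref{pro:adpfs}.
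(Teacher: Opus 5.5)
Your proposal is correct and follows the paper's proof essentially step for step. You reduce the event $x^*_{\rm D}(\mathcal{D}_{\rm emb})=x^*_{\rm P}$ to $\bigcup_i\{\mathcal{D}_{\rm emb}\supseteq\mathcal{R}_i^{\mathcal{S}_\alpha^{\forall}}\}$ via Proposition \ref{pro:adpos2}, expand by inclusion--exclusion over the nonempty subfamilies indexed by $\mathcal{J}$, and count the $z$-subsets of $\mathfrak{U}[\mathcal{D}_\alpha]$ that contain the union of size $\bar R_j^{\mathcal{S}_\alpha^{\forall}}$; your remarks on uniform sampling without replacement, on the role of Assumption 1, and on $\mathfrak{U}[\mathcal{D}_\alpha]\subseteq\mathcal{S}_\alpha^{\forall}$ make explicit what the paper leaves implicit.
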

 \textit{Proof}: Proposition \ref{pro:adpos2} indicates that $\varrho=\mathbb{P}[x^*_{\rm D}(\mathcal{D}_{\rm emb})=x^*_{\rm P}]=\mathbb{P}[\mathcal{D}_{\rm emb} \supseteq \mathcal{R}_i^{\mathcal{S}_\alpha^{\forall}}\, (\exists i \in \{1,\dots,R \})]$ under condition 1. To evaluate the probability of event $\mathcal{D}_{\rm emb} \supseteq \mathcal{R}_i^{\mathcal{S}_\alpha^{\forall}}$ ($\exists i \in \{1,\dots,R \}$), we define the following events:
 \begin{itemize}
    \item E$_i$ ($ i=1,\ldots,R$): when $z$ data points are randomly selected from $\mathfrak{U}[\mathcal{D}_\alpha]$, they contain all data points in $\mathcal{R}_i^{\mathcal{S}_\alpha^{\forall}}$.
    \item E$^*$: when $z$ data points are randomly selected from $\mathfrak{U}[\mathcal{D}_\alpha]$, they contain all data points in at least one of $\mathcal{R}_i^{\mathcal{S}_\alpha^{\forall}}$ ($ i=1,\ldots,R$), i.e., E$^*$: $\mathcal{D}_{\rm emb} \supseteq \mathcal{R}_i^{\mathcal{S}_\alpha^{\forall}}\, (\exists i \in \{1,\dots,R \})$.
    \item A$_j$ ($j \in \mathcal{J}$):  when $z$ data points are randomly selected from $\mathfrak{U}[\mathcal{D}_\alpha]$, they contain all data points in the $j$th subset of the SDDS family $\mathfrak{R}^{\mathcal{S}_\alpha^{\forall}}$, i.e., $\mathfrak{R}^{\mathcal{S}_\alpha^{\forall}}_j$, where $\mathcal{J}$ is the index set of all subsets of $\mathfrak{R}^{\mathcal{S}_\alpha^{\forall}}$.
\end{itemize}
It suffices to know that E$^*$=$\bigcup_{i=1}^R$E$_i$. By inclusion-exclusion, we have 
\begin{equation} \label{thm6_proof_1}
    \mathbb{P}[\text{E}^*]=\mathbb{P}\left[\bigcup_{i=1}^R\text{E}_i \right]=\sum_{j \in \mathcal{J}} (-1)^{J_j+1}\mathbb{P}\left[\bigcap_{i \in \mathcal{J}_j}\text{E}_i \right].
\end{equation}
It's not hard to know that A$_j=\bigcap_{i \in \mathcal{J}_j}$E$_i$. Let $\bar R_j^{\mathcal{S}_\alpha^{\forall}}$ denote the number of data points in $\mathfrak{R}^{\mathcal{S}_\alpha^{\forall}}_j$, we have
\begin{equation} \label{thm6_proof_2}
    \mathbb{P}\left[\bigcap_{i \in \mathcal{J}_j}\text{E}_i \right]=\mathbb{P}[\text{A}_j]=\frac{\binom{\underline{D}_\alpha - \bar R_j^{\mathcal{S}_\alpha^{\forall}}}{z- \bar R_j^{\mathcal{S}_\alpha^{\forall}}}}{\binom{\underline{D}_\alpha}{z}}
\end{equation}
 where $\underline{D}_\alpha=|\mathfrak{U}[\mathcal{D}_\alpha]|$. Combining (\ref{thm6_proof_1}) and (\ref{thm6_proof_2}), we have (\ref{varrho}).

\hfill $\Box$

\section{The Data-embedded Solution Paradigm} \label{sec5}

Using DEP as the deterministic approximation, a Data-embedded Solution Paradigm (DESP) is proposed for solving PECO problems. The overall procedure is summarized as follows: 
\begin{enumerate}
    \item Obtain $\mathcal{D}_\alpha$ from the original data set $\mathcal{D}$;
    \item Learn appropriate values of  $\bar R^{\mathcal{S}_\alpha^{\forall}}=[\bar R_1^{\mathcal{S}_\alpha^{\forall}},\ldots,\bar R_J^{\mathcal{S}_\alpha^{\forall}}]$ for a given problem from historical data;
    \item construct $\mathcal{D}_{\rm emb}$ by randomly selecting $z$ data points from $\mathfrak{U}[\mathcal{D}_\alpha]$, where $z$ is determined from (\ref{varrho}) based on a user-specified $\varrho$;
    \item Solve DEP($\mathcal{D}_{\rm emb}$) using off-the-shall solvers.
\end{enumerate}
While the theoretical foundation of Step 3 has been established in Section 3, this section provides further details on Steps 1 and 2.

\subsection{Obtaining $\mathcal{D}_\alpha$}
This subsection presents a practical approach of obtaining $\mathcal{D}_\alpha$ from $\mathcal{D}$ based on the following simple definition of the joint probability of data points. 
\begin{definition}
Let $\mathcal{D}_j^\eta$ denote the set of data points within the $\eta$-vicinity of a data point $\xi^{(j)}_{\rm d}$ in $\mathcal{D}$, i.e.,
\[
\mathcal{D}_j^\eta=\{\xi^{(k)}_{\rm d} \in \mathcal{D} \mid \|\xi^{(k)}_{\rm d}-\xi^{(j)}_{\rm d}\| \le \eta \},
\]
and let $D_j^\eta=|\mathcal{D}_j^\eta|$. The joint probability is then defined as
\[
\mathbb{P}[\xi=\xi^{(j)}_{\rm d}]=D_j^\eta/D,
\]
where $\eta$ is a small positive scalar.
\end{definition}
A pictorial illustration of the above definition is given in Figure \ref{fig:d32}. The choice of bandwidth $\eta$ affects the accuracy of the estimated probability $\mathbb{P}[\xi=\xi^{(j)}_{\rm d}]$, and its optimal value varies across applications. Fortunately, established methods such as plug-in estimators and cross-validation can be used to select an appropriate $\eta$. Based on this definition, Algorithm \ref{alg:alpha} outlines a procedure for constructing $\mathcal{D}_\alpha$ from $\mathcal{D}$.
\begin{figure}[h]
\centering
\includegraphics[width=8.8cm]{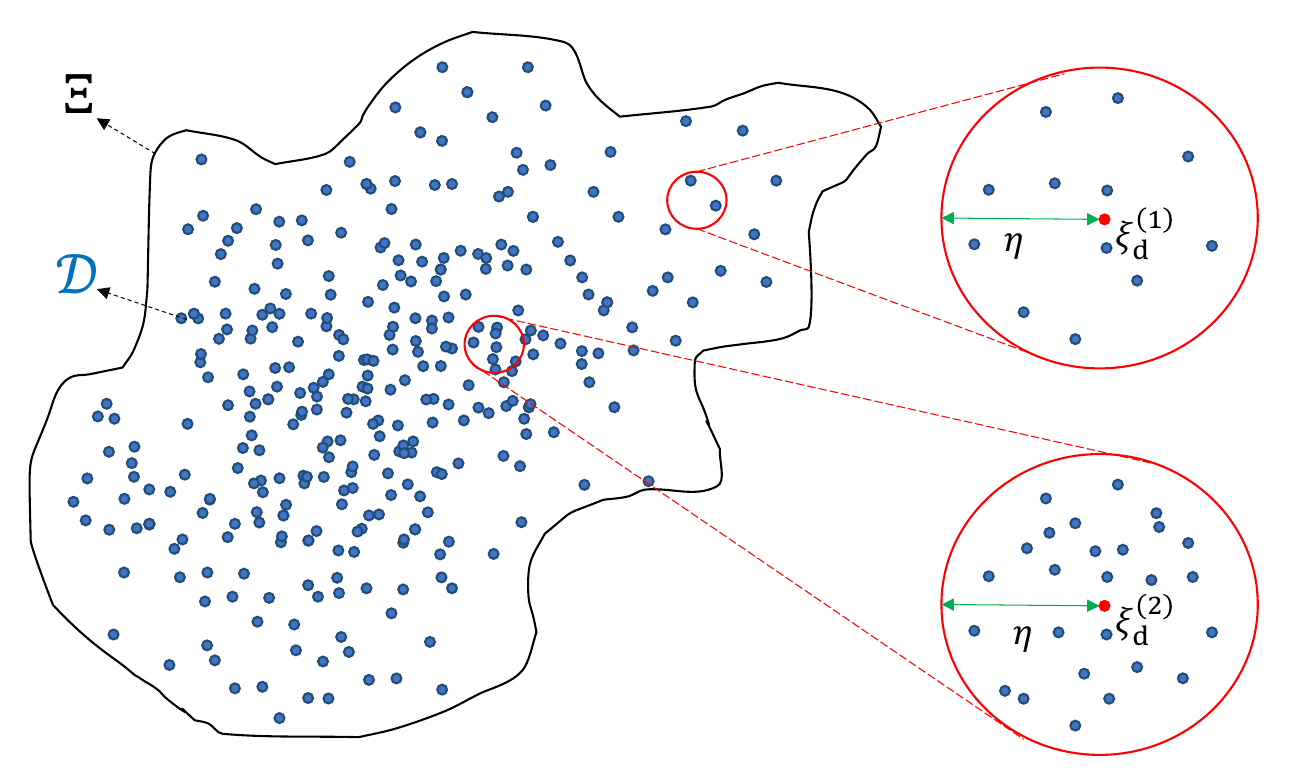}
\caption{Pictorial illustration on the joint probability of a data point, where the dots represent the data points (namely historical realizations of $\xi$ which are assumed to be i.i.d.) and the area enclosed by the solid curve is the support set of $\xi$. In this example, $\mathbb{P}[\xi^{(1)}_{\rm d}]<\mathbb{P}[\xi^{(2)}_{\rm d}]$ since $D_1^\eta<D_2^\eta$.}
\label{fig:d32}
\vspace{-6pt}
\end{figure}

\begin{algorithm} \label{algorithm}
\caption{: Procedure for constructing $\mathcal{D}_\alpha$}
\label{alg:alpha}
\begin{algorithmic}
 \State Given a data set $\mathcal{D}$ ($D=|\mathcal{D}|$) and set $i$=1:
 \State 1. Count the number of data points in the $\eta$-vicinity of $\xi^{(i)}_{\rm d}$ and save it to set $\mathcal{D}_\alpha$ if $D_i^\eta \ge \alpha D$ where $\eta$ is the selected bandwidth;
 \State  2. stop if $i=D$, otherwise set $i=i+1$ and repeat step 1. 
\end{algorithmic}
\end{algorithm}

\subsection{Learning the values of $\bar R^{\mathcal{S}_\alpha^{\forall}}$}
 \subsubsection{A simple option}
 While $\xi$ represents uncertain parameters, we use $\delta$ to denote deterministic parameters that also influence the problem. The optimal solution $x^*$ depends on both $\xi$ and $\delta$. Furthermore, each $x^*$ is associated with an SDDS family $\mathfrak{R}^{\mathcal{S}_\alpha^{\forall}}$. Therefore, under Condition 1, there exists a one-to-one correspondence between $\delta$ and $\bar R^{\mathcal{S}_\alpha^{\forall}}$. Suppose the problem has been solved $K$ times historically. This yields a dataset 
\[
\{(\bar R^{\mathcal{S}_\alpha^{\forall}}_{(k)},\delta_{(k)}),\; k=1,\ldots,K\},
\]
which can be used to learn a mapping $\bar R^{\mathcal{S}_\alpha^{\forall}}=\mathcal{L}(\delta)$. For a new instance with parameter $\delta$, this learned function provides an estimate of $\bar R^{\mathcal{S}_\alpha^{\forall}}$, which can then be used in (\ref{varrho}) to determine the required $z$ for a desired confidence level $\varrho$. 

 \subsubsection{Rationality of Theorem \ref{thm:varrho} in a machine learning setting}
If the embedded data set $\mathcal{D}_{\rm emb}$ is formed by randomly selecting $z$ data points from $\mathfrak{U}[\mathcal{D}_\alpha]$, Theorem \ref{thm:varrho} provides a probabilistic measure of the accuracy of approximating PECO using DEP. Although the theorem does not require full knowledge of the SDDS family $\mathfrak{R}^{\mathcal{S}_\alpha^{\forall}}$, it depends on the sizes of its subsets, i.e., $\bar R_j^{\mathcal{S}_\alpha^{\forall}}$ ($j \in \mathcal{J}$), which are also not directly observable prior to solving DEP($\mathfrak{U}[\mathcal{D}_\alpha]$).

However, compared to identifying the full SDDS family, predicting the sizes of its subsets is significantly easier when historical solution data are available. These quantities can be estimated using standard machine learning methods. To illustrate, consider an analogy: in a room with several people, predicting $\bar R_j^{\mathcal{S}_\alpha^{\forall}}$ corresponds to estimating the number of people present, whereas identifying the full SDDS family corresponds to determining both the number of people and their individual identities. The former is substantially simpler than the latter, which justifies the practical applicability of Theorem \ref{thm:varrho}.

 %$n$ sets $A_1$, $\ldots$, $A_n \subset B$ satisfy that $A_i \nsubseteq A_j$ if $i \neq j$ ($\forall i,j=1,\ldots,n$). Let's consider an experiment of randomly selecting $M$ elements from $B$ and let $P_i$ denote the probability that the $M$ selected elements contain all elements in $A_1$ ($i=1,\ldots,n$). Further let $P$ denote the probability that the $M$ selected elements contain all elements in $\bigcup_{i=1}^n A_i$, what's $P$? Note that $M$ is bigger than the number of elements in $\bigcup_{i=1}^n A_i$. If $C=A_i\bigcap A_j$, do we have $P_C=P_iP_j$?
 
 \section{Conclusion and Future Work}
This paper proposed a novel optimization paradigm, termed Probable-Event Constrained Optimization (PECO), to address decision-making problems under uncertainty when the underlying probability distribution is unknown. By enforcing feasibility across all realizations with sufficiently high probability, PECO provides a modeling framework that is both practically meaningful and aligned with engineering requirements. 

To overcome the computational challenges associated with PECO, we developed a Data-embedded Solution Paradigm (DESP), in which a data-embedded programming (DEP) model serves as a deterministic approximation. Theoretical analysis established the relationship between DEP and PECO from both feasible set and optimal solution perspectives. In particular, the concepts of boundary-forming data sets (BFDS) and solution-determining data set (SDDS) families were introduced to characterize when a finite DEP formulation can exactly recover the PECO solution. Building on these insights, we derived a probabilistic accuracy guarantee for DEP based on random data embedding. The resulting expression quantifies the likelihood that a finite DEP solution matches the true PECO optimum as a function of the embedded data size. This provides a principled mechanism for balancing computational tractability and solution accuracy. Furthermore, we outlined a practical workflow for implementing DESP, including procedures for identifying probable data points and learning key structural parameters from historical data.

Overall, the proposed framework offers a new perspective on optimization under uncertainty by integrating data-driven modeling with rigorous theoretical guarantees. It bridges the gap between purely distribution-based approaches and data-centric methods, and opens up new opportunities for scalable and reliable decision-making in complex systems. While the proposed Data-embedded Solution Paradigm (DESP) provides a principled and practical framework for solving PECO problems, several directions remain for further improvement and extension.

The current approach for constructing $\mathcal{D}_\alpha$ relies on a simple neighborhood-based estimation of joint probabilities using a fixed bandwidth parameter $\eta$. Although effective and easy to implement, this method may suffer from sensitivity to the choice of $\eta$, especially in high-dimensional settings or when the data distribution exhibits heterogeneity. Future work will focus on developing more advanced and adaptive methods for identifying $\mathcal{D}_\alpha$. One promising direction is to leverage techniques from density estimation, such as kernel density estimation with data-driven bandwidth selection, to improve the accuracy of probability estimation. In addition, clustering-based methods or manifold learning techniques could be explored to better capture the intrinsic structure of the data and identify high-probability regions more robustly. Another important direction is to consider statistical learning approaches that directly classify data points into probable and non-probable categories, potentially avoiding explicit probability estimation. Such methods may improve scalability and robustness, particularly for large-scale and high-dimensional data sets.

The current framework of learning $\bar R^{\mathcal{S}_\alpha^{\forall}}$ assumes that the values of $\bar R^{\mathcal{S}_\alpha^{\forall}}$ can be learned from historical data via a mapping $\bar R^{\mathcal{S}_\alpha^{\forall}}=\mathcal{L}(\delta)$. While this provides a simple and practical solution, the accuracy of the learned model is critical to the overall performance of DESP. Future research will investigate more sophisticated learning strategies for estimating $\bar R^{\mathcal{S}_\alpha^{\forall}}$. In particular, machine learning models with stronger generalization capabilities, such as ensemble methods or deep neural networks, may be employed to capture complex relationships between problem parameters $\delta$ and the structure of the SDDS family. Moreover, incorporating uncertainty quantification into the learning process is an important direction. Instead of producing point estimates, probabilistic predictions of $\bar R^{\mathcal{S}_\alpha^{\forall}}$ could be used to derive more robust choices of $z$ in (\ref{varrho}), thereby improving the reliability of the resulting DEP solutions. Finally, it would be valuable to explore theoretical guarantees on the learnability of $\bar R^{\mathcal{S}_\alpha^{\forall}}$, including sample complexity and generalization bounds. Such analysis could provide deeper insights into the conditions under which DESP achieves reliable performance.

Beyond the two core components above, several broader extensions are worth exploring. These include developing adaptive sampling strategies for constructing $\mathcal{D}_{\rm emb}$, integrating DESP with online or streaming data settings, and extending the framework to handle dynamic or time-dependent uncertainty. Investigating these directions would further enhance the applicability and robustness of the proposed paradigm.

\bmhead{Declaration}
This version is excerpted from a full length paper that is currently under review.

\bibliography{references}

\end{document}